\theoremstyle{plain}
\newtheorem{theorem}{Theorem}[section]
\newtheorem{lemma}[theorem]{Lemma}
\newtheorem{corollary}[theorem]{Corollary}
\newtheorem{conjecture}[theorem]{Conjecture}
\newtheorem{proposition}[theorem]{Proposition}
\theoremstyle{definition}
\numberwithin{equation}{section}
\begin{document}
\date{\today}
\title{On star edge colorings of bipartite and subcubic graphs}

\author{
Carl Johan Casselgren\footnote{Department of Mathematics, 
Link\"oping University, 
SE-581 83 Link\"oping, Sweden.
{\it E-mail address:} carl.johan.casselgren@liu.se}
\and
Jonas B. Granholm\footnote{Department of Mathematics, 
Link\"oping University, 
SE-581 83 Link\"oping, Sweden.
{\it E-mail address:} jonas.granholm@liu.se}
\and
Andr\'e Raspaud\footnote{LaBRI,
Bordeaux University,
350, cours de la Lib\'eration,
33405 Talence cedex,
France.
{\it E-mail address:} raspaud@labri.fr}
}
\maketitle

\bigskip
\noindent
{\bf Abstract.}
A star edge coloring of a graph is a proper edge coloring with no
$2$-colored path or cycle of length four.
The star chromatic index $\chi'_{st}(G)$ of $G$ is the minimum
number $t$ for which $G$ has a star edge coloring with $t$ colors.
We prove upper bounds for the star chromatic index of 
bipartite graphs $G$ where all vertices in one part
have maximum degree $2$ and all vertices in the other part has maximum
degree $b$. Let $k$ be an integer ($k\geq 1$), we prove that if $b=2k+1$ then $\chi'_{st}(G) \leq 3k+2$;
and if $b=2k$, then $\chi'_{st}(G) \leq 3k$; both upper bounds are sharp.
We also consider complete
bipartite graphs; in particular we determine
the star chromatic index of such graphs when one part has size at most $3$,
and prove upper bounds for the general case.

Finally, we consider the well-known conjecture that subcubic graphs have
star chromatic index at most $6$; in particular we settle this conjecture
for cubic Halin graphs.

\bigskip

{\em Keywords:} star edge coloring, star chromatic index, edge coloring

\section{Introduction}

    A {\em star edge coloring} of a graph is a proper edge coloring with no
    $2$-colored path or cycle of length four.
    The {\em star chromatic index} $\chi'_{st}(G)$ of $G$ is the minium
    number $t$ for which $G$ has a star edge coloring with $t$ colors.

    Star edge coloring was recently introduced by Liu and Deng \cite{LiuDeng},
    motivated by the vertex coloring version, see e.g. 
    \cite{Albertson, FertinRaspaudReed}.
    This notion is intermediate between {\em acyclic edge coloring},
    where every two-colored subgraph must be acyclic, and 
    {\em strong edge coloring}, where every color class
    is an induced matching.

    Dvorak et al.~\cite{DvorakMoharSamal} studied star edge colorings
    of complete graphs and obtained the currently best upper and lower
    bounds for the star chromatic index of such graphs. A fundamental
    open question here is to determine whether $\chi'_{st}(K_n)$ is linear
    in $n$.
    Bezegova et al.~\cite{BezegovaLuzar} investigated star edge colorings
    of trees and outerplanar graphs. 
    Wang et al. \cite{WangWangWang, WangWangWang2} quite recently
    obtained some upper bounds on the star chromatic index of
    graphs with maximum degree four, and also for some families of planar 
    and related classes of graphs.
    Besides these results,
    very little is known about star edge colorings.

    In this paper, we primarily consider star edge colorings
    of bipartite graphs. 
    As for complete graphs,
    a fundamental problem for complete bipartite graphs is to determine
    whether the star chromatic index is a linear function on the
    number of vertices.
    We determine the star chromatic index of complete
    bipartite graphs where one part has size at most $3$,
    and obtain some bounds on the star chromatic index for
    larger complete bipartite graphs. Note that the complete
    bipartite graph $K_{r,s}$ requires exactly $rs$ colors
    for a strong edge coloring; indeed it has been conjectured
    \cite{BrualdiMassey}
    that any bipartite graph where the parts have
    maximum degrees $\Delta_1$ and $\Delta_2$, respectively, has
    a strong edge coloring with $\Delta_1 \Delta_2$ colors.
    As we shall see, for star
    edge colorings the situation is quite different.

    Furthermore, we study star chromatic index of bipartite graphs
    where the vertices in one part all have small degrees.
    Nakprasit \cite{Nakprasit} proved that if $G$ is a bipartite graph
    where the maximum degree of one part is $2$, then $G$ has a
    strong edge coloring with $2 \Delta(G)$ colors.
    Here we obtain analogous results for star edge colorings:
    we obtain a sharp upper bound for the star chromatic
    index of a
    bipartite graph where one part has maximum degree two.

Finally, we consider the following conjecture first posed in 
\cite{DvorakMoharSamal}.

\begin{conjecture}
\label{conj:cubic}
    If $G$ has maximum degree at most $3$, then $\chi'_{st}(G) \leq 6$.
\end{conjecture}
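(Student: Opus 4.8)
We do not expect to settle the full conjecture; the plan is instead to establish it for the class of \emph{cubic Halin graphs}, i.e.\ graphs $G = T\cup C$ where $T$ is a tree all of whose internal vertices have degree $3$ and $C$ is a cycle through all the leaves of $T$ in the cyclic order given by a plane embedding of $T$. The argument is by induction on $|V(G)|$. If $T$ is a star, then $G = K_4$, and one checks by hand that $\chi'_{st}(K_4)\le 6$ (in fact $\le 5$); this is the base case. Otherwise, root $T$ at an internal vertex and pick an internal vertex $v$ of maximum depth: since $T$ is cubic, $v$ has exactly one internal neighbour $u$ and two leaf children $x,y$, and $x,y$ are consecutive on $C$, say $C$ contains the path $w_1 - x - y - w_2$. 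Contracting the fan $\{v,x,y\}$ to a single vertex $v$ — so that $v$ now lies on the outer cycle between $w_1$ and $w_2$ and retains only its edge to $u$ inside the tree — yields a smaller cubic Halin graph $G'$, which by the induction hypothesis has a star edge colouring $\varphi$ with $6$ colours. One also has to check that this reduction never degenerates: $G'$ has no multiple edges, and the small cases $|C|\in\{3,4\}$, where $G'$ collapses to $K_4$, are exactly the ones that bottom out the induction.

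The heart of the proof is transferring $\varphi$ to a star $6$-edge-colouring of $G$. Keeping $\varphi$ untouched away from the fan, the task is to colour the five edges $vx, vy, xy, xw_1, yw_2$ (and, if need be, recolour $vu$), so the problem is entirely local: the only relevant data are $\varphi(vu)=\alpha$, the two colours used at $u$ besides $\alpha$, and, at each of $w_1$ and $w_2$, the two colours on its incident edges other than the one that was $vw_1$ or $vw_2$. A first attempt is to let $vx$ and $vy$ inherit the former colours of $vw_1$ and $vw_2$ and then extend greedily. A short counting argument — exploiting that the star condition already holding in $G'$ forces $\{\alpha,\varphi(vw_1),\varphi(vw_2)\}$ to be disjoint from the colour sets at $w_1$ and at $w_2$ — shows that the available palettes are large enough \emph{unless} a small number of tight colour patterns occur around $u$, $w_1$, $w_2$. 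For those patterns one recolours $vu$ to a suitable colour, and if even that is obstructed, propagates the recolouring one further step along the tree towards the root; the bounded degree of $T$ keeps this propagation finite.

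I expect the main obstacle to be precisely this extension step. The ``no bichromatic $P_4$'' requirement links edges that are two apart, so naïve greedy colouring with only $6$ colours can fail, and the verification splits into a moderately long case analysis on the colours seen at $u$, $w_1$ and $w_2$; the tightest cases — where $\{\varphi(vu)\}$ together with the colour sets at $w_1$ and $w_2$ already uses all $6$ colours — are the ones forcing a recolouring of $vu$ and careful tracking of the new constraints it creates at $u$'s other neighbours. No genuinely new idea seems available for the general conjecture along these lines: the difficulty there is that an arbitrary cubic graph admits no such clean recursive fan structure to induct on.
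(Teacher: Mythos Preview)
You and the paper both aim at the same special case of the conjecture---cubic Halin graphs---and both argue by induction, but the reductions are genuinely different, and your version has a concrete gap.

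Your reduction contracts a single depth-one fan $\{v,x,y\}$ to a leaf, leaving five edges to colour. The paper instead takes a longest path in $T$, looks at the vertex $u$ at distance~$2$ from the leaves, and removes \emph{all} of $u$'s descendants (four or six vertices, depending on whether $u$'s second child is itself a leaf or has two leaf children), so that $u$ becomes a leaf of the smaller Halin graph. This larger reduction is the point: it gives seven or nine edges to colour, but the only external constraints come from the colour sets at $w$ (the parent of $u$), at $x_1$ (the cycle neighbour on one side), and at $z$ or $y_2$ (the cycle neighbour on the other). The paper then carries out a finite case analysis---split according to how many colours are left over once those three palettes are accounted for---and in every case finds an explicit extension \emph{without} recolouring any edge outside the removed block.

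The gap in your argument is the claim that ``the star condition already holding in $G'$ forces $\{\alpha,\varphi(vw_1),\varphi(vw_2)\}$ to be disjoint from the colour sets at $w_1$ and at $w_2$''. This is false: in $G'$ nothing prevents, say, an edge at $w_1$ from carrying the colour $\alpha=\varphi(vu)$, provided no edge at distance two completes a bichromatic $P_4$. Once that disjointness fails, your counting argument for the greedy extension collapses, and you are forced into the ``propagate the recolouring one further step along the tree'' escape hatch. That step is not a proof: recolouring $vu$ alters the star constraints at $u$'s other two neighbours, which may in turn be tight, and you give no argument that this process terminates or even that a single extra recolouring always suffices. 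The paper's choice of a two-level reduction is precisely what buys enough room to avoid any such propagation; with your one-level reduction the palettes can be too small, and the missing idea is exactly the larger chunk.
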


Dvorak et al. \cite{DvorakMoharSamal} proved a slightly weaker version of Conjecture
\ref{conj:cubic}, namely that
$\chi'_{st}(G) \leq 7$ if $G$ is subcubic;
Bezegova et al established \cite{BezegovaLuzar} that
Conjecture \ref{conj:cubic} holds for all trees and outerplanar graphs,
while it is still open for e.g. planar graphs.
In this paper we verify that the conjecture holds for 
some families of graphs with maximum degree three, namely
bipartite graphs where one part has maximum degree $2$, cubic Halin graphs
and another family of planar graphs.

\section{Bipartite graphs}

    In this section we consider star edge colorings of bipartite graphs.
    We first consider complete bipartite graphs.
    Trivially $\chi'_{st}(K_{1,d}) = d$, 

    It is straightforward that $\chi'_{st}(K_{2,2})=3$.
    For general complete bipartite graphs where
    one part has size $2$, we have the following easy observation.

\begin{proposition}
\label{prop:K2d}
	For the complete bipartite graph $K_{2,d}$, $d \geq 3$, we have 
	$\chi'_{st}(K_{2,d}) = 2d - \left\lfloor \frac{d}{2} \right\rfloor$.
\end{proposition}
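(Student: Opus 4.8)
The plan is to work directly with the coloring constraints. Write $V(K_{2,d})=\{u_1,u_2\}\cup\{v_1,\dots,v_d\}$, where $u_1,u_2$ is the part of size two, and for a candidate proper edge coloring $\varphi$ put $a_i=\varphi(u_1v_i)$ and $b_i=\varphi(u_2v_i)$. Since one part of the bipartition has only two vertices, every cycle of length four has the form $u_1v_iu_2v_ju_1$ and every path with four edges has the form $v_iu_1v_ju_2v_k$ or $v_iu_2v_ju_1v_k$ with $i,j,k$ pairwise distinct. Working out what a $2$-colored such cycle or path means, one obtains: $\varphi$ is a star edge coloring of $K_{2,d}$ if and only if (i) the $a_i$ are pairwise distinct, (ii) the $b_i$ are pairwise distinct, (iii) $a_i\neq b_i$ for all $i$, and (iv) there is no pair $i\neq j$ with $a_i=b_j$ and $a_j=b_i$, and no pairwise distinct $i,j,k$ with $a_i=b_j$ and $a_j=b_k$.

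For the lower bound I would introduce the digraph $D$ on $\{v_1,\dots,v_d\}$ with an arc from $v_i$ to $v_j$ whenever $a_i=b_j$ (then $i\neq j$ by (iii)). Conditions (i) and (ii) force every in-degree and out-degree in $D$ to be at most one, and condition (iv) says exactly that $D$ has no directed $2$-cycle and no directed path with two arcs; hence no vertex of $D$ meets two arcs, so $D$ is a matching and has at most $\lfloor d/2\rfloor$ arcs. On the other hand, a color used twice by $\varphi$ occurs once among the $a_i$ and once among the $b_j$ by (i) and (ii), hence corresponds to an arc of $D$; so at most $\lfloor d/2\rfloor$ colors are used twice. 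Counting edge-slots, if $x$ colors are used twice then $\varphi$ uses $2d-x\ge 2d-\lfloor d/2\rfloor$ colors.

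For the matching upper bound I would give an explicit coloring. Pair $v_{2i-1}$ with $v_{2i}$ for $i=1,\dots,\lfloor d/2\rfloor$, choose distinct ``link colors'' $\ell_1,\dots,\ell_{\lfloor d/2\rfloor}$ and set $a_{2i-1}=b_{2i}=\ell_i$, and give every remaining edge (namely each $a_{2i}$, each $b_{2i-1}$, and also $a_d,b_d$ when $d$ is odd) a new private color. Then (i)--(iii) hold by construction, and in the resulting digraph $D$ the only arcs are $v_{2i-1}\to v_{2i}$, since a private color lies on a single edge and so produces no arc; thus $D$ is a matching and (iv) holds. The number of colors used is $\lfloor d/2\rfloor+\bigl(2d-2\lfloor d/2\rfloor\bigr)=2d-\lfloor d/2\rfloor$, which meets the lower bound.

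The step I expect to be the crux is the translation in the first paragraph, and specifically the observation that it is the absence of $2$-colored \emph{paths} of length four --- not just of $2$-colored $4$-cycles --- that prevents $D$ from containing a directed path with two arcs; this is what caps the number of twice-used colors at $\lfloor d/2\rfloor$ rather than at something closer to $d$. Everything after that is routine degree and edge counting together with checking the explicit coloring.
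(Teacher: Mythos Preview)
Your proof is correct and follows essentially the same idea as the paper: both arguments bound the number of colors shared between the two vertices of degree $d$ by $\lfloor d/2\rfloor$, and both give an explicit coloring meeting that bound. Your digraph $D$ is simply a more explicit packaging of the paper's pigeonhole step---the paper argues directly that $\lfloor d/2\rfloor+1$ common colors would force some $v_j$ to have both its edges in the common set and hence sit on a $2$-colored $P_4$ or $C_4$, whereas you spell out the full characterization (i)--(iv) and read off that $D$ must be a matching; the content is the same, and your version has the mild advantage of making clear exactly which forbidden configuration (the $2$-colored path, not just the cycle) is doing the work.
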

\begin{proof}
	Suppose $K_{2,d}$ has parts $X$ and $Y$,
	where $X=\{x_1, x_2\}$ and $Y=\{y_1,\dots, y_d\}$

	If $x_1$ and $x_2$ have at least
	$\lfloor d/2 \rfloor +1$ common colors on their incident edges, 
	say $1,\dots, \lfloor d/2 \rfloor +1$,
	then
	there is at least one vertex in $Y$ which is incident with two edges
	both of which have colors from $\{1,\dots,\lfloor d/2 \rfloor +1\}$; this implies
	that there is a $2$-colored $P_4$ or $C_4$ in $K_{2,d}$.
	Hence, there are at least $2d- \lfloor d/2 \rfloor$ distinct colors
	in a star edge coloring of $K_{2,d}$.
	
	To prove the upper bound, we give an explicit star edge coloring $f$
	of $K_{2,d}$. 
	We set $f(x_1y_i)= i$, for $i=1,\dots, d$, and
	$$f(x_2y_i) = d-i+1, \quad i=1,\dots, \lfloor d/2 \rfloor, \text{ and }
	f(x_2y_i) = i+\lfloor (d+1)/2 \rfloor, \quad i=\lfloor d/2 \rfloor+1,\dots, d.$$
	The coloring $f$  is a star edge coloring using exactly 
	$2d- \left\lfloor \frac{d}{2} \right\rfloor$ colors.
	
\end{proof}
 
    Wang et al.~\cite{WangWangWang} proved that $\chi'_{st}(K_{3,4}) =7$,
		and it is known that $\chi'_{st}(K_{3,3}) =6$.
    Using Proposition \ref{prop:K2d}, we can prove the following. 
		For an edge coloring $f$ of a graph $G$ and a vertex $u$ of $G$, 
		we
		denote by $f(u)$ the set of colors of all edges incident with $u$.
    	
	\begin{theorem}
	\label{th:3d}
	For the complete bipartite graph $K_{3,d}$, $d \geq 5$, it holds
	that $\chi'_{st}(K_{3,d}) = 3 \left\lceil\frac{d}{2}\right\rceil$.
	\end{theorem}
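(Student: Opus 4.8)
The plan is to prove the lower and upper bounds separately, writing $X=\{x_1,x_2,x_3\}$ and $Y=\{y_1,\dots,y_d\}$ for the two parts of $K_{3,d}$ and setting $n=\lceil d/2\rceil$. For the lower bound I would observe that in any star edge coloring $f$ of $K_{3,d}$, the restriction to the subgraph spanned by $\{x_i,x_j\}\cup Y$ is a star edge coloring of $K_{2,d}$, so the argument in the proof of Proposition~\ref{prop:K2d} shows $|f(x_i)\cap f(x_j)|\leq\lfloor d/2\rfloor$ for every pair $i\neq j$. Since $|f(x_i)|=d$ for each $i$, inclusion--exclusion gives $|f(x_1)\cup f(x_2)\cup f(x_3)|\geq 3d-3\lfloor d/2\rfloor=3\lceil d/2\rceil$, which is the desired lower bound.

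For the upper bound I would first reduce to the case $d=2n$ even: deleting one vertex of $Y$ from a star edge coloring of $K_{3,2n}$ with $3n$ colors leaves a star edge coloring of $K_{3,2n-1}$ with at most $3n=3\lceil(2n-1)/2\rceil$ colors, so it suffices to colour $K_{3,2n}$ with $3n$ colours whenever $n\geq 3$ (and $d\geq5$ forces $n\geq3$). The construction I have in mind uses three disjoint colour classes $A=\{a_1,\dots,a_n\}$, $B=\{b_1,\dots,b_n\}$, $C=\{c_1,\dots,c_n\}$ with indices read modulo $n$; grouping $Y$ into the pairs $\{y_{2k-1},y_{2k}\}$, I would set $f(x_1y_{2k-1})=a_k$, $f(x_2y_{2k-1})=b_{k+1}$, $f(x_3y_{2k-1})=c_{k+1}$ and $f(x_1y_{2k})=b_k$, $f(x_2y_{2k})=c_{k+1}$, $f(x_3y_{2k})=a_{k+1}$. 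Then $f(x_1)=A\cup B$, $f(x_2)=B\cup C$, $f(x_3)=C\cup A$, each of size $2n$ with pairwise intersections of size $n=\lfloor d/2\rfloor$, and the three colours at every $y_j$ lie in distinct letter-classes; so $f$ is a proper edge colouring with $3n$ colours, and its incidence structure already meets the lower bound exactly.

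It then remains to verify that $f$ is a star edge colouring. Since every colour is used on exactly two edges, any two colour classes span at most four edges, so I would run through the pairs of colours $\{\alpha,\beta\}$ and check that the at most four edges coloured $\alpha$ or $\beta$ never form a four-edge path or a four-cycle. When $\alpha,\beta$ lie in the same letter-class this is immediate: the four edges split at their two common $X$-endpoints into two disjoint two-edge paths whose $Y$-indices have opposite parity. When $\alpha,\beta$ lie in different letter-classes — say $\alpha\in A$, $\beta\in B$, sharing the $X$-vertex $x_1$ — two of the four edges form a two-edge path through $x_1$, and tracking the $Y$-indices of the remaining two edges one finds that both can attach to this path (creating a forbidden $\alpha\beta\alpha\beta$ path) only if $n$ divides $2$; since $n\geq3$ this never happens, and the two other letter-class pairs behave in the same way, the arithmetic in each case reducing to the statement that a fixed power of the cyclic shift on $\{1,\dots,n\}$ has no fixed point. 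I expect this last case analysis — enumerating how the four edges of a two-colour subgraph can be mutually adjacent and ruling out the bad patterns — to be the only real work; the lower bound and the check that $f$ is proper are routine.
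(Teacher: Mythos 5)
Your proposal is correct and follows essentially the same route as the paper: the lower bound via the pairwise-intersection bound $|f(x_i)\cap f(x_j)|\le\lfloor d/2\rfloor$ inherited from Proposition~\ref{prop:K2d}, and the upper bound via an explicit coloring of $K_{3,2n}$ in which each vertex of $X$ uses exactly two of three colour blocks of size $n$ and cyclic shifts rule out bicoloured four-edge paths. Your construction (pairing $y_{2k-1},y_{2k}$ and shifting indices mod $n$) is only a cosmetic variant of the paper's (splitting $Y$ into halves $U,V$ and shifting within blocks $\{1,\dots,k\},\{k+1,\dots,2k\},\{2k+1,\dots,3k\}$), and your case analysis checks out.
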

	\begin{proof}
	    Let us first consider the case when $d$ is even.
	    The lower bound $\frac{3d}{2}$ follows immediately from
	    Proposition \ref{prop:K2d}, so let us turn to the proof
	    of the upper bound. We shall give an explicit star edge coloring
	    of $K_{2,d}$ using $\frac{3d}{2}$ colors.
	    
	    Let $X$ and $Y$ be the parts of $K_{3,d}$,
	    where $X= \{x_1, x_2, x_3\}$, $Y = U \cup V$,
	    $U = \{u_1,\dots, u_k\}$, $V=\{ v_1, \dots, v_k\}$, and $d=2k$.
	    We define a star edge coloring $f$ by setting
	    
	    $$f(x_1u_i)= i,  \quad
	    f(x_2u_i)= i+k, \text{ and }
        f(x_3u_i)= i+2k, \quad i=1,\dots,k,$$
	   and 
	   \begin{itemize}
	       \item $f(x_1v_i) = i+k$, $i=1,\dots, k$, 
	       
	       \item $f(x_2v_i)= i+2k+1$, $i=1,\dots, k-1$, and $f(x_2v_k)=2k+1$,
	       
	       \item $f(x_3v_i) = i+2$, $i=1,\dots, k-2$, $f(x_3v_{k-1})=1$, and
	       $f(x_3v_k)= 2$.
	   \end{itemize}
	   Clearly, $f$ is a proper edge coloring of $K_{3,2k}$ with $3k$ colors.
	   Suppose that $K_{3,2k}$ contains a $2$-edge-colored path or cycle
	   $F$ with four edges. Let us prove that 
	   $F$ does not contain two edges $e_1$
	   and $e_2$ incident with the same vertex from $X$, say $x_1$,
	   and two other edges $e_3$ and $e_4$ incident with another vertex
	   from $X$, say $x_2$. Then, since the restriction $f'$ of $f$
	   to the subgraph induced by $X \cup U$ satisfies
	   $f'(x_1) \cap f'(x_2) = \emptyset$ (and similarly for the subgraph
			induced by $X \cup V$), we
	   may assume that $f(e_1) \in \{1,\dots, k\}$ and
	   $f(e_2) \in \{k+1, \dots, 2k\}$. However, no edge incident
	   with $x_2$ is colored by a color from $\{1,\dots, k\}$,
	   which contradicts that $F$ is $2$-edge-colored.
	   
	   Suppose now that there is a $2$-edge-colored path $F$ on $4$ edges, where exactly
	   two edges are incident to the same vertex from $X$, say $x_2$.
	   Then, as before, we may assume that
	   the two edges $e_2$ and $e_3$ of $F$ that are
	   incident with $v_2$ satisfy that $f(e_2) \in \{2k+1, \dots, 3k\}$
	   and $f(e_3) \in \{k+1,\dots 2k\}$. This means that the edge $e_1$
	   of $F$ that is incident with $x_1$ must satisfy $f(e_1) \in \{k+1, \dots, 2k\}$,
	   and so, $f(e_1) = f(e_3)$. 
	   Thus, for the edge $e_4$ of $F$ incident with $x_3$ it holds that
	   $f(e_4) \in \{2k+1,\dots, 3k\}$ and $f(e_4) = f(e_2)$.
	   However, by the construction of $f$ we have that $f(e_4) = f(e_2)-1$
	   (except if $f(e_2) =2k+1$, which implies that $f(e_4) = 3k$);
	   this contradicts that $F$ is $2$-edge-colored.
	   
	   Let us now consider the case when $d$ is odd; suppose $d = 2k+1$.
	   The upper bound follows immediately from the even case, since
	   $K_{3, 2k+1}$ is a subgraph of $K_{3, 2k+2}$. Let us prove
	   the lower bound.
	   
	   Let $X$ and $Y$ be the parts of $K_{3,d}$, where $X=\{x_1, x_2, x_3\}$ and
	   consider a star edge coloring of $K_{3,d}$.
        Since
        the subgraph induced by $\{x_1,x_2\} \cup Y$ is isomorphic to
        $K_{2,d}$, there are at most $k$ colors which can appear
        at both $x_1$ and $x_2$. Since the same holds for $x_1$ and $x_3$,
        and $x_2$ and $x_3$, it follows that we need at least 
        $3k+3$ colors for a star edge coloring of $K_{3,d}$.
	\end{proof}
	
	Next, we consider complete bipartite graphs where the parts
	have size at least $4$.
	Let us first establish a lower bound on the star chromatic index
	of such graphs.
	
	\begin{proposition}
	\label{prop:K4d}
	For the complete bipartite graph $K_{4,d}$ ($d\geq4$) it holds that
	$\chi'_{st}(K_{4,d})\ge \frac{10d}{6}$.
	\end{proposition}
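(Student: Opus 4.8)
The plan is to prove the lower bound by a counting argument over the color classes, using only that $K_{4,d}$ contains copies of $K_{2,d}$ together with Proposition~\ref{prop:K2d}. Let $X = \{x_1,x_2,x_3,x_4\}$ and $Y$ be the parts of $K_{4,d}$, let $f$ be a star edge coloring with $t := \chi'_{st}(K_{4,d})$ colors, and for each color $c$ let $m_c$ be the number of edges colored $c$. Since every color class is a matching and $|X| = 4$, we have $0 \le m_c \le 4$; moreover $\sum_c m_c = |E(K_{4,d})| = 4d$, and $t$ equals the number of colors $c$ with $m_c \ge 1$.

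First I would record the basic inequality. For any pair $x_i, x_j \in X$ the subgraph induced by $\{x_i, x_j\}\cup Y$ is isomorphic to $K_{2,d}$, and the restriction of $f$ to it is still a star edge coloring, so by the argument in the proof of Proposition~\ref{prop:K2d} we have $|f(x_i)\cap f(x_j)| \le \lfloor d/2\rfloor$. Summing over the $\binom{4}{2}=6$ pairs from $X$ and counting by colors — a color $c$ lies in $f(x_i)\cap f(x_j)$ for precisely $\binom{m_c}{2}$ of these pairs — yields
$$\sum_c \binom{m_c}{2} = \sum_{\{i,j\}\subseteq X}|f(x_i)\cap f(x_j)| \le 6\left\lfloor\frac{d}{2}\right\rfloor \le 3d.$$

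To conclude I would combine this with $\sum_c m_c = 4d$ through the elementary pointwise estimate $2m - \binom{m}{2}\le 3$, valid for every integer $m\ge 1$ (with equality at $m=2$ and $m=3$), while $2m-\binom{m}{2}=0$ for $m=0$. Applying this to each $m_c$ and summing over the colors actually used gives
$$3t \ge \sum_c\Bigl(2m_c-\binom{m_c}{2}\Bigr) = 2\sum_c m_c - \sum_c\binom{m_c}{2} \ge 8d - 3d = 5d,$$
hence $\chi'_{st}(K_{4,d}) = t \ge \tfrac{5d}{3} = \tfrac{10d}{6}$.

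I do not expect a serious obstacle here; the content is entirely in two choices. The first is to aggregate the six pairwise bounds $|f(x_i)\cap f(x_j)|\le\lfloor d/2\rfloor$ through the multiplicities $m_c$ rather than using any one of them in isolation. The second is the coefficient $2$ in $2m-\binom{m}{2}$: it is exactly the value that makes the combination $2\sum m_c - \sum\binom{m_c}{2}$ force the extremal configuration (about $2d/3$ colors of multiplicity $3$ and $d$ colors of multiplicity $2$), and it can be found mechanically as the solution of the linear program that minimizes the number of colors subject to $\sum_c m_c = 4d$ and $\sum_c\binom{m_c}{2}\le 3d$. No further structure of complete bipartite graphs beyond the $K_{2,d}$ subgraphs is used.
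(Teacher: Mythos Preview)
Your proof is correct and rests on the same raw ingredient as the paper: the only structural fact used is the pairwise bound $|f(x_i)\cap f(x_j)|\le\lfloor d/2\rfloor$ inherited from the $K_{2,d}$ subgraphs. The paper packages this into a linear program in the $2^4$ variables $x_S$ counting colors that appear at exactly the vertex set $S\subseteq X$, and appeals to solving the LP relaxation (with the formulation deferred to an appendix) to obtain the optimum $10d/6$. Your argument is the explicit dual certificate of that same LP: the pointwise inequality $2m-\binom{m}{2}\le 3$ for $m\in\{1,2,3,4\}$ is exactly the linear combination of constraints that certifies the bound, and you verify it by hand. What you gain is a self-contained proof in place of a black-box computation; what the paper's formulation gains is a template that extends mechanically to $K_{r,d}$ for larger $r$ (as they go on to do for $r=5,6,7,8$), where guessing the right dual multipliers by inspection is less immediate.
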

	\begin{proof}
	    Let $X$ and $Y$ be the parts of $K_{4,d}$, where $X=\{x_1,x_2,x_3,x_4\}$,
	    and let $C_i$ be the set of colors used on the edges
			incident with the vertex $x_i$.
	    From the argument in Proposition~\ref{prop:K2d},
	    we can conclude that the sets $C_i$ may overlap in at most $d/2$ colors.
	    Thus the optimization problem
	    \begin{align*}
	        \text{minimize } \biggl|\bigcup_{i=1}^4 C_i\biggr|\\
	        \text{subject to } |C_i|&=d&i&=1,2,3,4\\
	        |C_i\cap C_j|&\le\frac{d}{2}&i,j&=1,2,3,4,\ i\ne j
	    \end{align*}
	    gives a lower bound on $\chi'_{st}(K_{4,d})$.
	    A linear integer program description of this problem is given 
			in Appendix~\ref{app:opt}.
	    Solving the linear relaxation of this problem gives the desired lower bound.
	\end{proof}
	
	For $d\leq12$, the values of $\chi'_{st}(K_{4,d})$ is given in Table~\ref{tab:K4d}.
	Explicit colorings realizing these values are given in Appendix~\ref{app:colorings},
	and for all values of $d$ except $d=6$ they can be proven optimal
	by solving the optimization problem in the proof of Proposition~\ref{prop:K4d}.
	For the case $d=6$, a computer search showing that
	the value in Table~\ref{tab:K4d} is optimal has been conducted.
	
	\begin{table} [H]
	    \centering
	    \begin{tabular}{c|c}
	       $d$ & $\chi'_{st}(K_{4,d})$ \\
	       \hline
	        4 &  7 \\
	        5 & 10 \\
	        6 & 11 \\
	        7 & 13 \\
	        8 & 14 \\
	        9 & 16 \\
	       10 & 17 \\
	       11 & 20 \\
	       12 & 20
	    \end{tabular}

	    \caption{The values of $\chi'_{st}(K_{4,d})$ for $d\leq 12$.}
	    \label{tab:K4d}
	\end{table}

    The fact that
	$\chi'_{st}(K_{4,12})=20$
	can be used to derive a general upper bound on the star chromatic
	index of complete bipartite graphs with four vertices in one part.
	
	\begin{proposition}
	For the complete bipartite graph $K_{4,d}$ it holds that
	$\chi'_{st}(K_{4,d})\le20\left\lceil\frac{d}{12}\right\rceil$.
	\end{proposition}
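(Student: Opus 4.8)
The plan is to reduce to the already established value $\chi'_{st}(K_{4,12})=20$ by partitioning the large part into blocks of size at most $12$ and coloring the corresponding complete bipartite subgraphs with pairwise disjoint palettes. Put $m=\left\lceil\frac{d}{12}\right\rceil$, and let $X=\{x_1,x_2,x_3,x_4\}$ and $Y$ be the two parts of $K_{4,d}$. Split $Y$ into sets $Y_1,\dots,Y_m$, each of size at most $12$. For every $j$, the subgraph $G_j$ of $K_{4,d}$ induced by $X\cup Y_j$ is isomorphic to $K_{4,|Y_j|}$, hence is a subgraph of $K_{4,12}$, so it admits a star edge coloring $f_j$ using at most $20$ colors. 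Relabel colors so that the color sets $P_1,\dots,P_m$ used by $f_1,\dots,f_m$ are pairwise disjoint, and define a coloring $f$ of $K_{4,d}$ by $f(x_iy)=f_j(x_iy)$ whenever $y\in Y_j$. Then $f$ uses at most $20m=20\left\lceil\frac{d}{12}\right\rceil$ colors, so it remains to check that $f$ is a star edge coloring.

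Properness is immediate: two edges meeting at a vertex either both lie in one block $G_j$, where $f=f_j$ is proper, or lie in distinct blocks and then receive colors from the disjoint palettes. For the star condition, suppose toward a contradiction that some path or cycle $F$ with four edges is $2$-colored by $f$, say with colors $a$ and $b$. In the bipartite graph $K_{4,d}$ such an $F$ has a vertex $y\in Y$ incident with two edges of $F$ (this is clear for a cycle, and for a path at least one interior vertex lies in $Y$). Since $f$ is proper, these two edges carry the two distinct colors $a$ and $b$, and, as both are incident with $y$, both lie in the block $G_j$ with $y\in Y_j$; hence $\{a,b\}\subseteq P_j$. Now every edge of $F$ is colored $a$ or $b$, hence from $P_j$; but each edge of $F$ is incident with some vertex of $Y$, and every edge at such a vertex is colored from the palette of the block containing that vertex. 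As the palettes are pairwise disjoint, every vertex of $Y$ met by $F$ lies in $Y_j$, so $F$ is contained in $G_j$. This contradicts the fact that $f_j$ is a star edge coloring, and completes the argument.

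There is no genuine obstacle here. The two facts that make the disjoint-palette union succeed are, first, the elementary observation that any path or cycle with four edges in the bipartite graph $K_{4,d}$ has a vertex of $Y$ on which two of its edges meet, and second, that in our construction all edges incident with a fixed vertex of $Y$ receive colors from a single palette $P_j$. Together these force any hypothetical $2$-colored four-edge path or cycle to lie inside one block $G_j\subseteq K_{4,12}$, which is impossible. If desired, one could isolate this as a short lemma on when an edge partition of a graph, colored block by block with disjoint palettes, yields a star edge coloring.
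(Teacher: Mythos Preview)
Your proof is correct and follows essentially the same approach as the paper: partition the large side into $\lceil d/12\rceil$ blocks of size at most $12$, color each resulting $K_{4,\le 12}$ with a disjoint palette of $20$ colors, and take the union. The paper's version is terser and simply asserts that the union of the colorings is a star edge coloring, whereas you spell out the verification; your additional argument (that a $2$-colored four-edge path or cycle must meet some $y\in Y$ in two edges and hence lie entirely in one block) is correct and a welcome elaboration.
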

	\begin{proof}
	    Set $s = \left\lfloor \frac{d}{12} \right\rfloor$.
	    The proposition follows easily by decomposing
	    $K_{4,d}$ into $s$ copies
	    $H_1,\dots, H_s$ of $K_{4,12}$
	    and possibly and one copy $J$
	    of a complete bipartite graph $K_{4,a}$,
	    where $1\leq a \leq 11$, and then using
	    disjoint sets of colors for star edge colorings of each of the
	    graphs $H_1,\dots, H_s$ and $J$;
	    these star edge colorings together
	    form a star edge coloring of
	    $K_{4,d}$.
	\end{proof}

	Note that it follows from Proposition \ref{prop:K4d} that the upper bound
	in the preceding proposition is in fact sharp for an infinite
	number of values of~$d$.
	
	\bigskip

	Using computer searches we have also determined
	the star chromatic index for some additional 
	complete bipartite graphs;
	see Tables~\ref{tab:K5d} and~\ref{tab:Krd}.
	Again, 
	explicit colorings appear in
	Appendix~\ref{app:colorings}.
	\begin{table}[H]
	    \centering
	    \begin{tabular}{c|c}
	       $d$ & $\chi'_{st}(K_{5,d})$ \\
	       \hline
	        5 & 11 \\
	        6 & 12 \\
	        7 & 14 \\
	        8 & 15 \\
	        9 & 17 \\
	       10 & 18 \\
	       11 & 20
	    \end{tabular}
	    \caption{The values of $\chi'_{st}(K_{5,d})$ for $d\le11$.}
	    \label{tab:K5d}
	\end{table}
	
	\begin{table} [H]
	    \centering
	    \begin{tabular}{cc|c}
	       $r$ & $d$ & $\chi'_{st}(K_{r,d})$ \\
	       \hline
	       6 &  6 & 13 \\
	         &  7 & 14 \\
	         &  8 & 15 \\
	       \hline
	       7 &  7 & 14 \\
	         &  8 & 15 \\
	       \hline
	       8 &  8 & 15
	    \end{tabular}
	    \caption{The values of $\chi'_{st}(K_{r,d})$ for $r=6,7,8$ and $d\le8$.}
	    \label{tab:Krd}
	\end{table}
	
	Moreover, using the idea in the proof of Proposition~\ref{prop:K4d},
	one can prove lower bounds on the star chromatic index
	for further families of complete bipartite graphs. Let
	us here just list a few cases corresponding to the values
	in the tables above:

	\begin{itemize}
	
	   \item  $\chi'_{st}(K_{5,d})\ge \frac{10d}{6}$ if $d\geq 5$;
    
        \item $\chi'_{st}(K_{6,d})\ge \frac{7d}{4}$ if $d \geq 6$;
        
	    \item $\chi'_{st}(K_{7,d})\ge \frac{7d}{4}$ if $d \geq 7$;
	    
	    \item $\chi'_{st}(K_{8,d})\ge \frac{18d}{10}$ if $d\ge8$.
	
	\end{itemize}

	Finally, let us note some further consequences 
	of the above results for
	general complete bipartite graphs. 
	By decomposing a general
	complete bipartite graph $K_{r,d}$ into complete bipartite graphs
	where one part has size e.g. at most $3$, 
	and using disjoint sets of colors for star edge colorings of
	distinct complete bipartite subgraphs,
	we deduce, using Theorem \ref{th:3d}, that
	$\chi'_{st}(K_{r,d}) \leq 3 \left\lceil\frac{d}{2}\right\rceil
	\left\lceil\frac{r}{3}\right\rceil$. 
	However, using the values of star chromatic indices in
	the Tables~\ref{tab:K5d} and~\ref{tab:Krd}, it is possible
	to deduce an upper bound on $\chi'_{st}(K_{r,d})$
	which is better for large values of $r$ and $d$.
	
	\begin{corollary}
	\label{cor:Krd}
	    For any $r,d \geq 1$ it holds that
	    $\chi'_{st}(K_{r,d}) \leq 15 \left\lceil\frac{d}{8}\right\rceil
	\left\lceil\frac{r}{8}\right\rceil$.
	\end{corollary}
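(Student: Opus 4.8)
The plan is to mimic the decomposition argument already used for the bounds $\chi'_{st}(K_{4,d})\le 20\left\lceil\frac{d}{12}\right\rceil$ and $\chi'_{st}(K_{r,d})\le 3\left\lceil\frac{d}{2}\right\rceil\left\lceil\frac{r}{3}\right\rceil$, but now chopping \emph{both} parts into blocks of size at most $8$ and invoking the value $\chi'_{st}(K_{8,8})=15$ from Table~\ref{tab:Krd}. Concretely, write the part $X$ of size $r$ as a disjoint union $X_1\cup\dots\cup X_p$ with $p=\left\lceil\frac{r}{8}\right\rceil$ and each $|X_i|\le 8$, and the part $Y$ of size $d$ as $Y_1\cup\dots\cup Y_q$ with $q=\left\lceil\frac{d}{8}\right\rceil$ and each $|Y_j|\le 8$. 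Then $K_{r,d}$ is the edge-disjoint union of the subgraphs $G_{i,j}$ induced by $X_i\cup Y_j$, and each $G_{i,j}$ is a complete bipartite graph $K_{a,b}$ with $a,b\le 8$.

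The first key point is that each $G_{i,j}$ satisfies $\chi'_{st}(G_{i,j})\le 15$: it is a subgraph of $K_{8,8}$, the restriction of a star edge coloring to a subgraph is again a star edge coloring, and $\chi'_{st}(K_{8,8})=15$ by Table~\ref{tab:Krd}. So for each pair $(i,j)$ fix a star edge coloring $f_{i,j}$ of $G_{i,j}$ using a colour set $S_{i,j}$ with $|S_{i,j}|\le 15$, chosen so that the sets $S_{i,j}$ are pairwise disjoint; in total at most $15pq=15\left\lceil\frac{d}{8}\right\rceil\left\lceil\frac{r}{8}\right\rceil$ colours are used. Define $f$ on $K_{r,d}$ by letting $f$ agree with $f_{i,j}$ on $E(G_{i,j})$.

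It remains to check that $f$ is a star edge coloring. Properness: two edges meeting at a vertex of $X_i$ either go to the same $Y_j$, in which case they lie in $G_{i,j}$ and are distinguished by $f_{i,j}$, or go to distinct $Y_j,Y_{j'}$, in which case they receive colours from the disjoint palettes $S_{i,j}$ and $S_{i,j'}$; symmetrically for edges meeting at a vertex of $Y_j$. For the absence of a bichromatic path or cycle on four edges, observe that every colour is used in exactly one subgraph $G_{i,j}$, so a two-coloured configuration uses two colours from a single palette $S_{i,j}$ and hence all four of its edges lie in $G_{i,j}$; this contradicts that $f_{i,j}$ is a star edge coloring of $G_{i,j}$.

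I do not expect a genuine obstacle: the argument is routine once $\chi'_{st}(K_{8,8})=15$ is available, and the only point needing a line of care is the ``confinement'' step — that a two-coloured four-edge subgraph lives inside a single block — which is immediate from disjointness of the palettes. The real content of the corollary is thus the (computer-assisted) determination that $\chi'_{st}(K_{8,8})=15$, recorded in Table~\ref{tab:Krd}.
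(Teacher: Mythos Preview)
Your proof is correct and matches the paper's intended argument: the corollary is stated without proof, immediately after the one-sided decomposition giving $3\lceil d/2\rceil\lceil r/3\rceil$, and is meant to be read as ``do the same thing with $K_{8,8}$ in place of $K_{3,d}$,'' using the computed value $\chi'_{st}(K_{8,8})=15$ from Table~\ref{tab:Krd}. Your two-sided block decomposition is exactly this.

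One small remark on the confinement step: your sentence ``a two-coloured configuration uses two colours from a single palette $S_{i,j}$'' does not follow from disjointness of the palettes \emph{alone}; a priori the two colours could sit in different palettes $S_{i,j}$ and $S_{i',j'}$. The missing line is that two adjacent edges of colours in $S_{i,j}$ and $S_{i',j'}$ share a vertex lying in $X_i\cap X_{i'}$ or $Y_j\cap Y_{j'}$, forcing $i=i'$ or $j=j'$; applying this twice along three consecutive edges of the path gives $(i,j)=(i',j')$. You flag the confinement as the point ``needing a line of care,'' so you are clearly aware of this, but the stated justification should be sharpened accordingly.
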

	
	
	Note that Dvorak et al. 
	\cite{DvorakMoharSamal} obtained an asymptotically better bound:
	it follows from their results that for every $\varepsilon > 0$,
	there is a constant $C>0$, such that for every $n \geq 1$,
	$\chi'_{st}(K_{n,n}) \leq C n^{1+\varepsilon}$.
	
	\bigskip
	\bigskip
	
    Next, we turn to general bipartite graphs with restrictions
    on the vertex degrees. Our first task is to generalize
    Proposition \ref{prop:K2d} to general bipartite graphs
    with even maximum degree.
	
    In the following we use the notation $G=(X,Y;E)$ for a bipartite
    graph $G$ with parts $X$ and $Y$ and edge set $E=E(G)$. 
		We denote by $\Delta(X)$ and
    $\Delta(Y)$ the maximum degrees of the vertices in the parts $X$
    and $Y$, respectively. A bipartite graph $G=(X,Y;E)$ where
    all vertices in $X$ have degree $2$ and all vertices in $Y$
    have degree $d$ is called {\em $(2,d)$-biregular}.
		
	If the vertices in one part of a bipartite graph $G$ 
	has maximum degree $1$, then trivially
    $\chi'_{st}(G)= \Delta(G)$. For the case when the vertices in 
    one of the parts have maximum degree two, we have the following.
	
	\begin{theorem}
	\label{th:bireg}
		If $G = (X,Y;E)$ is a bipartite graph, where $\Delta(X) = 2$
		and $\Delta(Y)=2k$, then
		$\chi'_{st}(G) \leq 3k$.
	\end{theorem}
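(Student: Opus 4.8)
The plan is to break $E(G)$ into $k$ parts, color each part with its own palette of three colors, and then show that any bichromatic four-edge path or cycle would be forced to lie inside a single part, where none exists.

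First I would pass to the loopless multigraph $H$ on vertex set $Y$ obtained by turning each $x \in X$ of degree $2$ into an edge joining its two neighbors, and each $x \in X$ of degree $1$ into a pendant edge at its neighbor (vertices of $X$ of degree $0$ are discarded). Then $\deg_H(y) = \deg_G(y) \le 2k$ for all $y \in Y$, so $\Delta(H) \le 2k$, and the edges of $H$ correspond bijectively to the vertices of $X$ of positive degree. Using the well-known fact that a (multi)graph of maximum degree at most $2k$ decomposes into $k$ subgraphs of maximum degree at most $2$ (e.g.\ de Werra's equitable edge-coloring theorem, or: embed $H$ in a $2k$-regular multigraph and apply Petersen's $2$-factor theorem), one gets a partition $X = X_1 \cup \dots \cup X_k$ such that for every $i$ and every $y \in Y$ at most two members of $X_i$ are adjacent to $y$ in $G$. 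Let $G_i$ be the spanning subgraph of $G$ with edge set $\{xy \in E(G) : x \in X_i\}$. Then the edge sets of $G_1,\dots,G_k$ partition $E(G)$, all edges of $G$ at a given vertex of $X$ lie in a single $G_i$, and $\Delta(G_i) \le 2$; since $G_i$ is bipartite with maximum degree at most $2$, it is a disjoint union of paths and cycles, and every cycle it contains has even length at least $4$.

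Next, fix the blocks $B_i = \{3i-2, 3i-1, 3i\}$ for $i = 1,\dots,k$, and star edge color each $G_i$ using only the three colors of $B_i$. This is possible because $\chi'_{st}(P_n) \le 3$ for every path $P_n$ and $\chi'_{st}(C_{2\ell}) \le 3$ for every $\ell \ge 2$, and hence the same bound holds for a disjoint union of paths and even cycles; this is standard, and if desired one can simply exhibit explicit periodic $3$-colorings, with a small modification on a cycle whose length is not a multiple of $3$. Combining these colorings gives an edge coloring $f$ of $G$ with $3k$ colors. That $f$ is proper is clear: $f$ is proper on each $G_i$, two edges meeting at a vertex of $Y$ but belonging to different $G_i$ get colors from disjoint blocks, and all edges at a vertex of $X$ belong to one $G_i$. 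To see there is no bichromatic path or cycle $F$ with four edges: as $G$ is bipartite, some $x \in X$ is incident with exactly two edges of $F$ (look at the middle vertices of $F$); these two edges receive distinct colors, both lying in the block $B_i$ of the part containing $x$, so—since $F$ uses only two colors—every edge of $F$ is colored from $B_i$. But every edge $x'y'$ of $G$ is colored from the block of the part containing $x'$, so disjointness of the blocks forces every such $x'$ into $X_i$, i.e.\ $F \subseteq G_i$, contradicting that $f$ restricted to $G_i$ is a star edge coloring. Hence $f$ is a star edge coloring of $G$ with $3k$ colors, and $\chi'_{st}(G) \le 3k$.

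There is no genuinely hard step, but the point on which everything turns—and which must be set up correctly—is that the cycles appearing in the $G_i$ are even, of length at least $4$: a cycle of $H$ of length $\ell \ge 2$ ``doubles'' into a cycle of $G$ of length $2\ell$, and it is precisely this that makes three colors per block suffice (recall $\chi'_{st}(C_5) = 4$, so short odd cycles would break the count). The two background ingredients—the maximum-degree-$2$ decomposition of $H$ and the bound $\chi'_{st} \le 3$ for paths and even cycles—are routine.
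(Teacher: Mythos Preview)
Your proposal is correct and follows essentially the same strategy as the paper: pass to the contracted multigraph on $Y$, invoke Petersen's theorem (or an equivalent degree-$2$ decomposition) to split $E(G)$ into $k$ pieces each of maximum degree at most $2$, star-$3$-color each piece with its own block of three colors, and then argue that any bichromatic four-edge path or cycle must have an internal $X$-vertex and is therefore confined to a single block and a single piece. The only cosmetic difference is that the paper first embeds $G$ into a $(2,2k)$-biregular supergraph, so that its pieces $F_i$ are unions of even cycles, whereas you work directly with $G$ and your pieces $G_i$ are unions of paths and even cycles; both routes are equally valid.
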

	
	Note that the upper bound in Theorem \ref{th:bireg} is sharp, as follows
	from Proposition \ref{prop:K2d}.

	For the proof of this theorem we shall use the following lemma.
	
	\begin{lemma}
		If $G$ is $(2,2k)$-biregular with parts $X$ and $Y$, then it decomposes
		into subgraphs $F_i$ such that $d_{F_i}(x) \in \{0,2\}$ for every $x \in X$
		and $d_{F_i}(y) = 2$ for every $y \in Y$.
	\end{lemma}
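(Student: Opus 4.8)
The plan is to reduce the statement to Petersen's classical $2$-factorization theorem by \emph{suppressing} the degree-$2$ vertices of $X$ and then undoing the operation.

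First I would build an auxiliary multigraph $H$ with vertex set $Y$: for every $x \in X$, with its two neighbours in $G$ being $y_x$ and $y'_x$, add an edge $e_x = y_xy'_x$ to $H$; thus $x \mapsto e_x$ is a bijection from $X$ onto $E(H)$. The point is that $H$ is $2k$-regular. Indeed, each $y \in Y$ is incident in $G$ with exactly $2k$ edges, one to each of $2k$ distinct vertices of $X$, and each such vertex $x$ contributes precisely one edge $e_x$ of $H$ incident with $y$; hence the edges of $G$ at $y$ are in bijection with the edges of $H$ at $y$, and $d_H(y) = d_G(y) = 2k$ (counted with multiplicity). The multigraph $H$ may have parallel edges, but this causes no difficulty; if $G$ is simple it has no loops either.

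Next I would invoke Petersen's theorem: every $2k$-regular multigraph decomposes into $k$ spanning $2$-regular subgraphs. (One proof orients each component Eulerian so that $d^{+} = d^{-}$ at every vertex, forms the bipartite graph between the in- and out-stubs at each vertex, extracts a perfect matching via Hall's theorem to split off a single $2$-factor, and induces on $k$.) So write $E(H) = E(H_1) \cup \dots \cup E(H_k)$, where the $H_i$ are pairwise edge-disjoint and $d_{H_i}(y) = 2$ for every $y \in Y$ and every $i$.

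Finally I would un-suppress. For each $i$ set $X_i = \{x \in X : e_x \in E(H_i)\}$, and let $F_i$ be the spanning subgraph of $G$ whose edge set consists of all edges of $G$ incident with a vertex of $X_i$. Since every edge of $G$ has exactly one endpoint in $X$, and since $X = X_1 \cup \dots \cup X_k$ is a partition (because $E(H) = E(H_1) \cup \dots \cup E(H_k)$ is), the $F_i$ are pairwise edge-disjoint and together cover $E(G)$. For $x \in X_i$ both edges of $G$ at $x$ lie in $F_i$, so $d_{F_i}(x) = 2$, while $d_{F_i}(x) = 0$ if $x \notin X_i$; and for $y \in Y$ the edges of $F_i$ at $y$ correspond exactly to the edges of $H_i$ at $y$, so $d_{F_i}(y) = d_{H_i}(y) = 2$. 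Thus $F_1, \dots, F_k$ is the required decomposition. I do not expect a genuine obstacle; the only points needing care are the regularity of $H$ (i.e.\ the bijection between $G$-edges and $H$-edges at a vertex of $Y$) and recalling that Petersen's theorem applies to multigraphs.
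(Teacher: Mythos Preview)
Your proof is correct and follows essentially the same approach as the paper: suppress the degree-$2$ vertices of $X$ to obtain a $2k$-regular multigraph on $Y$, apply Petersen's $2$-factor theorem, and then lift the resulting $2$-factors back to $G$. The paper states this more tersely, but the argument is the same, and your added care about the bijection between edges at a vertex of $Y$ and the possibility of parallel edges in $H$ only makes the reasoning more explicit.
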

	\begin{proof}
		From a $(2,2k)$-biregular graph $G$, construct a $2k$-regular
		multigraph $H$ by replacing every path of length $2$ with an internal
		vertex of degre $2$ by a single edge. By Petersen's $2$-factor theorem
		\cite{Petersen},
		$H$ has a decomposition into $2$-factors; these $2$-factors
		induce the required subgraphs of $G$.
	\end{proof}
	
	\begin{proof}[Proof of Theorem \ref{th:bireg}]
		If $G=(X,Y;E)$ is not $(2,2k)$-biregular then it is a subgraph of such
		a graph, so it suffices to consider the case when $G$
		is $(2,2k)$-biregular.
		
		Assume, consequently, that $G$ is a $(2,2k)$-biregular graph.
		By the preceding lemma, $G$ decomposes into subgraph $F_1, \dots, F_k$
		such that $d_{F_i}(x) \in \{0,2\}$ for every $x \in X$
		and $d_{F_i}(y) = 2$ for every $y \in Y$.
		Since each $F_i$ is a collection of even cycles, it has
		a star edge coloring with three colors.
		
		For $i=1,\dots, k$, we color each $F_i$ with colors $3i-2, 3i-1, 3i$
		so that each $F_i$ gets a star edge coloring with $3$ colors.
		This yields a star edge coloring $f$
		of $G$; indeed, for every vertex $x$ of $X$, all colors on edges
		incident with $X$ is in $\{3i-2, 3i-1, 3i\}$ for some $i$;
		hence, since any (possible) $2$-colored cycle or path $J$ with four edges 
		contains at least two vertices from $X$, it must
		be colored
		by two colors from $\{3i-2, 3i-1, 3i\}$ for some $i$, which means
		that all edges of $J$ is in $F_i$. This contradicts that the
		restriction of $f$ to each $F_i$ is a star edge coloring.
		We conclude that $f$ is in fact a star edge coloring of $G$.
	\end{proof}

	For the case $d=3$, we can generalize Proposition \ref{prop:K2d}
	as follows.

	\begin{theorem}
	\label{th:bip23}
			If $G=(X,Y;E)$ is a bipartite graph with $\Delta(X)=2$ and $\Delta(Y)=3$,
			then $\chi'_{st}(G) \leq 5$.
	\end{theorem}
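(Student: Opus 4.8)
The plan is to argue by induction on $|E(G)|$: first peel off easy structure to reduce to the case where $G$ is $2$-connected and $(2,3)$-biregular, and then settle that case by a decomposition into even cycles plus a matching-type subgraph, in the spirit of the proof of Theorem~\ref{th:bireg}. Throughout, recall that a star edge colouring is exactly a proper edge colouring with no bichromatic path or cycle having four edges.

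\emph{Reductions.} If $G$ has a vertex of degree at most $1$, delete it and its incident edge (if any), colour the rest by induction, and colour the deleted edge last: being pendant at its other endpoint $y$, it must avoid only the at most two colours already at $y$ and the at most two colours that would begin a bichromatic four-edge path from it (there are at most two, since the neighbours of $y$ lie in $X$ and have degree at most $2$), and $4<5$. If $G$ has a cut vertex $v$, write $G=G_1\cup G_2$ with $V(G_1)\cap V(G_2)=\{v\}$, $E(G_1)\cap E(G_2)=\varnothing$, each $G_i$ nonempty; colour $G_1$, $G_2$ by induction and then apply a permutation of the colour set to the colouring of $G_2$. Since $v$ separates the two pieces, the only new bichromatic four-edge subgraphs are paths through $v$, and a direct count — using that a neighbour of $v$ lying in $X$ has a palette of size at most $2$, and that $\deg(v)\le 3$ — shows the permutation can be chosen so that the glued colouring is proper at $v$ and avoids all such paths. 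Lastly, if $G$ is $2$-connected with $\delta(G)\ge 2$ and $Y$ has a vertex of degree $3$ as well as a vertex $w$ of degree $2$, let $x,x'$ be the neighbours of $w$ (each of degree $2$, as $\delta(G)\ge 2$ and $\Delta(X)=2$) and $v,v'$ their other neighbours; since $G$ is $2$-connected, $v\ne v'$, so replace the path $v\,x\,w\,x'\,v'$ by a path $v\,x^{*}\,v'$ through a new degree-$2$ vertex $x^{*}$, obtaining a smaller graph $G^{*}$ in the same class. A star $5$-edge-colouring of $G^{*}$ extends to $G$: colour the four edges $vx,xw,wx',x'v'$ in turn, each step excluding at most four colours (from properness and from the few bichromatic four-edge paths those edges could complete).

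\emph{The $2$-connected $(2,3)$-biregular case.} After the reductions either every vertex of $Y$ has degree $2$ — in which case $G$ is a union of even cycles and $\chi'_{st}(G)\le 3$ — or $G$ is $(2,3)$-biregular. Assume the latter. Suppressing the degree-$2$ vertices of $X$ turns $G$ into a cubic multigraph $H$ of which $G$ is the subdivision, and $H$ is bridgeless since a bridge of $H$ would be a cut vertex of $G$. By Petersen's theorem~\cite{Petersen}, $H$ has a perfect matching $M$, and $F:=E(H)\setminus M$ is a $2$-factor of $H$. Let $G_F$ and $G_M$ be the subgraphs of $G$ consisting of the edges lying over $F$ and over $M$, respectively. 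Then every vertex of $X$ has both of its edges in $G_F$ or both in $G_M$, while every vertex of $Y$ has two edges in $G_F$ and one in $G_M$; hence $G_F$ is a disjoint union of even cycles and $G_M$ a disjoint union of paths with two edges. Colour each cycle of $G_F$ with a star edge colouring using colours from $\{1,2,3\}$, and each path of $G_M$ with colours $4$ and $5$. The resulting colouring $f$ is proper. If it had a bichromatic four-edge path or cycle $J$, then $J$ cannot use only colours of $\{1,2,3\}$ (it would lie inside one cycle of $G_F$, which is star coloured) nor only colours of $\{4,5\}$ (it would lie inside one two-edge path), so $J$ alternates a colour of $\{1,2,3\}$ with a colour of $\{4,5\}$; but then $J$ has an internal vertex in $X$ whose two edges, one in $G_F$ and one in $G_M$, receive these two colours — impossible, since each vertex of $X$ lies entirely inside $G_F$ or entirely inside $G_M$. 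Thus $f$ is a star edge colouring and $\chi'_{st}(G)\le 5$.

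\emph{Main obstacle.} I expect the delicate points to be the two recombination arguments in the reductions: choosing the colour permutation at a cut vertex, and colouring the four reinstated edges after contracting a degree-$2$ vertex of $Y$. Each reduces to a careful count of the colours forbidden by properness together with those forbidden to avoid a new bichromatic four-edge path through the separating vertex, and the value $5$ is exactly what makes these counts go through (in places only one admissible colour remains). Everything else is bookkeeping, given the decomposition $G=G_F\cup G_M$ and Petersen's theorem.
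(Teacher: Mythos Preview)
Your main argument for the $2$-connected $(2,3)$-biregular case is correct and is a genuinely different route from the paper's. The paper reduces to a $(2,3)$-biregular minimal counterexample, then removes a shortest even cycle $C_{2k}$, colours the remainder by minimality, recolours the pendant edges, and finishes by star list-colouring $C_{2k}$ from lists of size~$3$ (Lemma~\ref{lem:cycle}). You instead suppress $X$ to obtain a bridgeless cubic multigraph, invoke Petersen's theorem to split $G$ into a union of even cycles $G_F$ and a union of $2$-edge paths $G_M$, and colour these with disjoint palettes $\{1,2,3\}$ and $\{4,5\}$. Your decomposition argument is clean: any bichromatic $P_5$ or $C_4$ mixing the two palettes would force some $X$-vertex to have one edge in $G_F$ and one in $G_M$, which is impossible. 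This is arguably more conceptual than the paper's cycle-removal-plus-list-colouring, at the cost of needing stronger reductions (to $2$-connected rather than merely $(2,3)$-biregular) so that Petersen applies.

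The reductions, however, are where your write-up is thin, and one of them has a genuine gap as stated. The cut-vertex permutation can be made to work, but ``a direct count'' understates the bookkeeping (especially when $v\in Y$ has degree~$3$ split $1$--$2$ across the pieces; one must control both $\sigma(f_2(vb))$ and $\sigma^{-1}(f_1(va))$ simultaneously). More seriously, in the degree-$2$-in-$Y$ reduction your claim ``colour the four edges $vx,xw,wx',x'v'$ in turn, each step excluding at most four colours'' fails for the final edge if done greedily in that order: when colouring $x'v'$ last, properness at $x'$ and $v'$ already forbids three colours, and the bichromatic $P_5$'s of the forms $r\,q'_i\,p'_i\,v'\,x'$ and $v\,x\,w\,x'\,v'$ (and their variants) can forbid several more, exceeding four in total. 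The clean fix is to \emph{inherit} the two end edges from $G^*$, setting $f(vx)=f^*(vx^*)$ and $f(x'v')=f^*(x^*v')$; the fact that $f^*$ is a star edge colouring of $G^*$ (in particular, no bichromatic $C_4$ or $P_5$ through $x^*$) then gives exactly the extra relations needed to colour $xw$ and $wx'$ from what remains. With that adjustment your argument goes through.
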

	
	To prove the theorem, we use the following well-known lemma, the proof of which
	is left to the reader, and the notion of a {\em list star edge coloring}.
	A {\em list assignment} $L$ for a graph $G$ is
	a map which assigns to each edge $e$ of $G$ a set  $L(e)$ of colors. 
	If each of the lists has size $k$, we call $L$ a 
	{\em $k$-list assignment}.
If $G$ admits a star edge coloring $\varphi$ such that 
$\varphi(e) \in L(e)$ for every edge $e$ of $G$, then $G$ is 
{\em star $L$-edge-colorable}; 
 $\varphi$ is a {\em star $L$-edge coloring of $G$}. The graph $G$ is
{\em star $k$-edge-choosable} if it is star $L$-edge-colorable for every 
list assignment $L$, where
$|L(e)| \geq k$ for every $e \in E(G)$.
	
\begin{lemma}
\label{lem:cycle}
		If $C$ is any cycle distinct from $C_5$, then
    $C$ is star $3$-edge-choosable.
\end{lemma}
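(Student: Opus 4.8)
The plan is to argue directly with the cyclic sequence of edges. Write $C=C_n$ with edges $e_1,\dots ,e_n$ in cyclic order (indices mod $n$) and let $L$ be a $3$-list assignment. The cases $n=3$ and $n=4$ I would settle by hand. For $C_3$ every proper $L$-edge coloring is vacuously a star edge coloring, and one exists because colouring $e_1,e_2,e_3$ in turn forbids at most two already used colours at each step. For $C_4$, pick $c(e_1)\in L(e_1)$ arbitrarily and $c(e_3)\in L(e_3)\setminus\{c(e_1)\}$, then choose $c(e_2)$ and $c(e_4)$ each avoiding the two colours $c(e_1)$ and $c(e_3)$; since $e_2$ is adjacent to both $e_1$ and $e_3$ this forces three distinct colours to appear, which rules out the only forbidden configuration, a $2$-coloured $C_4$.

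For $n\ge 6$ I would colour the edges greedily in the order $e_1,e_2,\dots ,e_n$. The key local observation is that, for $4\le i\le n-2$, the only four-edge path lying entirely in the already-coloured part and ending at $e_i$ is $e_{i-3}e_{i-2}e_{i-1}e_i$; it bars the colour $c(e_{i-1})$ (properness) and additionally bars $c(e_{i-2})$ only when $c(e_{i-3})=c(e_{i-1})$. Thus at most two colours are forbidden and the $3$-list always leaves a legal choice, and the first few edges are only easier. Hence the whole difficulty is concentrated where the cycle closes up: the two last edges $e_{n-1}$ and $e_n$ lie on the four \emph{distinct} configurations $e_{n-3}e_{n-2}e_{n-1}e_n$, $e_{n-2}e_{n-1}e_ne_1$, $e_{n-1}e_ne_1e_2$ and $e_ne_1e_2e_3$, so a priori as many as four colours could be barred from $e_n$.

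To handle this ``seam'' I would reserve slack in advance. Colouring $e_1,e_2$ first and then $e_3$ with a colour outside $\{c(e_1),c(e_2)\}$ (possible since only two colours are excluded and $|L(e_3)|=3$) already neutralises the configuration $e_ne_1e_2e_3$, whatever is later placed on $e_n$. At the other end of the seam, colouring $e_{n-3},e_{n-2},e_{n-1}$ still leaves room, and a short case analysis shows how to use it so that at most two \emph{distinct} colours end up forbidden from $e_n$ --- for instance by arranging $c(e_{n-1})\notin\{c(e_{n-2}),c(e_{n-3})\}$ together with $c(e_{n-1})\ne c(e_1)$ when the lists permit, and otherwise falling back on the case $c(e_{n-1})=c(e_1)$ with $c(e_{n-2})=c(e_2)$. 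In every scenario a legal colour remains for $e_n$, so the greedy construction completes to a star $L$-edge coloring.

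The step I expect to be the real work --- and the reason $C_5$ must be excluded --- is precisely this seam analysis: one must check that the reserved choices are genuinely available from lists of size $3$ and that no surviving case forces three pairwise distinct colours onto $e_n$. When $n=5$ the ``middle'' of the cycle has collapsed (the path $e_3$ is a single edge), the four seam configurations overlap too tightly, and no such steering succeeds, consistent with the known fact that $\chi'_{st}(C_5)=4$.
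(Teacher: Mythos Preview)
The paper does not prove this lemma: it is introduced as ``well-known'' and the proof is explicitly left to the reader. So there is no argument in the paper against which to compare yours.

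As a standalone attempt, your handling of $C_3$ and $C_4$ is correct, and the greedy-plus-seam strategy for $n\ge 6$ is a natural plan that can be made to work. However, what you have written is not yet a proof. You defer the actual seam case analysis, and your proposed fallback --- forcing $c(e_{n-1})=c(e_1)$ together with $c(e_{n-2})=c(e_2)$ when the preferred choices are blocked --- is not obviously available: $c(e_2)$ need not belong to $L(e_{n-2})$, and even when it does, placing it there may conflict with the star constraint coming from $e_{n-5}e_{n-4}e_{n-3}e_{n-2}$. You have correctly located the difficulty but not resolved it.

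One standard way to close the seam cleanly: if all lists coincide, exhibit an explicit $3$-colouring (repeat $1,2,3$ cyclically with a local fix depending on $n\bmod 3$). Otherwise some two consecutive edges have different lists; relabel so these are $e_n,e_1$ and choose $c(e_1)\in L(e_1)\setminus L(e_n)$ (nonempty since both lists have size~$3$). After also taking $c(e_3)\notin\{c(e_1),c(e_2)\}$ as you do, run the greedy procedure. At the end, the constraint $c(e_n)\ne c(e_1)$ is vacuous, and one checks that in every remaining case at most two colours are excluded from $L(e_n)$, so the final step succeeds.
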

	
	By the {\em distance} between two edges $e$ and $e'$ of a graph, 
	we mean the smallest number of edges in a path from an endpoint of $e$
	to an endpoint of $e'$.
	
	Before proving Theorem \ref{th:bip23}, we have to notice that 
	for $C_4$-free graphs satisfying the condition in Theorem \ref{th:bip23},
	this result can be deduced from the result of \cite{Ma05}. In \cite{Ma05}, 
	it is proved that the {\em incidence chromatic number} of a subcubic graph is at 
	most $5$; the {\em incidence chromatic number} of a graph $G$ is equal 
	to the strong chromatic index of  $G^*$, where  $G^*$ is the graph obtained 
	from $G$ by subdividing each  edge of $G$. 
	When $G$ is a cubic graph (with no multiple edges),
	the graph $G^*$ is a $(2,3)$-biregular graph with no cycles of length four.
	Now, since the strong chromatic index of a graph is
	an upper bound for its  star chromatic index, the result follows.
	
	Since the result of \cite{Ma05} only applies to $C_4$-free 
	$(2,3)$-biregular graphs, and to have a self-contained paper,
	we give our short proof of Theorem \ref{th:bip23}.
	
	\begin{proof}[Proof of Theorem \ref{th:bip23}]
			Since any cycle of even length has a star $3$-edge coloring, we
			may assume that $G$ has maximum degree $3$.
			
			Assume that $G$ is a counterexample to the theorem which minimizes
			$|V(G)| + |E(G)|$. Then $G$ satisfies the following:
			
			\begin{itemize}
			
				 \item[(i)] $G$ is connected;
				
					\item[(ii)] $G$ does not contain any vertex of degree $1$;
					
					\item [(iii)] no two vertices of degree $3$ are adjacent;
					
					\item[(iv)] $G$ does not contain two vertices of degree $3$
					that are linked by a path $P$ of length at least four, where all internal
					vertices of $P$ have degree $2$. Thus any vertex
					of degree $2$ has two neighbors of degree $3$,
					so $G$ is $(2,3)$-biregular.
			
			\end{itemize}
		Statements (i)-(iii) are straightforward. To see (iv), assume
		that $P$ is such a path, and let $u_1u_2$ and $u_2u_3$ be two adjacent
		edges of $P$ where $u_1, u_2$ and $u_3$ all have degree $2$.
		By assumption $G-u_2$ has a star edge coloring with $5$ colors.
		Now we can color $u_1u_2$ with a color not appearing on an edge of distance
		at most $1$ from $u_1u_2$ in $G$; there are at most four such edges, so this
		is possible. Next, we can color $u_2u_3$ by a color not appearing on $u_1u_2$
		or on any edge at distance at most $1$ from $u_2u_3$ 
		in $G-u_1u_2$; there are at most four
		such edges, so we can pick a color from $\{1,2,3,4,5\}$ for $u_2u_3$. This
		yields a star $5$-edge coloring of $G$; a contradiction, and so, (iv) holds.

		Let $C_{2k}=  u_1 u_2 \dots u_{2k}u_1$ be a shortest cycle of $G$;
		if $G$ does not have a cycle, then it is star $5$-edge-colorable by
		a result of \cite{BezegovaLuzar}, a contradiction.
		Then the graph $H= G-E(C_{2k})$ is star $5$-edge colorable.
		
		We define a new star edge coloring $f$ of $H$ by recoloring 
		every pendant edge $e$ 
		of $H$ by a color from $\{1,2,3,4,5\}$ 
		not appearing on any edge of distance at most $1$ from 
		from $e$; there are at most three such edges, so this is possible.
		
		Next, we define a list assignment $L$ for $C_{2k}$ with colors from
		$\{1,2,3,4,5\}$ by
		for each edge $u_iu_{i+1}$ of $C_{2k}$ 
		forbidding the colors on the two pendant edges
		of $H$ with smallest distance to $u_iu_{i+1}$ in $G$. Then every
		edge of $C_{2k}$ receives a list of size at least $3$; so by Lemma
		\ref{lem:cycle} is has star $L$-edge coloring. This coloring
		along with the star edge coloring $f$ of $H$ form a star edge coloring of $G$
		with $5$ colors.
	\end{proof}
	
	Note that the preceding theorem settles a particular case of
	Conjecture \ref{conj:cubic}.

	Let us briefly remark that
	there are $(2,3)$-biregular graphs with $\chi'_{st}(G) =4$;
	while such examples with $\chi'_{st}(G) =3$ trivially 
	do not exist.
	Take two copies of $P_5$, and denote these copies by 
	$H_1 =u_1u_2u_3u_4u_5$ and $H_2= v_1v_2v_3v_4v_5$. 
	Next, we add the edges
	$E'=\{u_1v_2, u_5v_4, u_2v_1, u_4v_5\}$ to $H_1 \cup H_2$; the resulting
	graph $G$ is $(2,3)$-biregular. We define a proper edge coloring $f$
	of this graph by setting 
	$$f(u_1u_2) = f(u_3u_4) = f(v_3v_4) =3, \quad
	f(u_2u_3) = f(v_2v_3)= f(v_4v_5) =2,$$
	and
	$$f(u_4u_5)=f(v_1v_2)=1,$$
	and by coloring all edges in $E'$
	by color $4$. The coloring $f$ is a star edge coloring, because
	all edges of $E'$ are adjacent to edges of three distinct colors.
	
	\bigskip

	For bipartite graphs $G=(X,Y;E)$ with $\Delta(X)=2$ and odd maximum
	degree  at least $5$, we can use
	Theorem \ref{th:bip23} for proving the following.

	\begin{theorem}
	\label{th:biregOdd}
	    If $G=(X,Y;E)$ is a bipartite graph with $\Delta(X)=2$
	    and $\Delta(Y) =2k+1$, then
	    $\chi_{st}'(G) \leq 3k+2$.
	\end{theorem}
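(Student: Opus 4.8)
The plan is to reduce to the biregular case, split $G$ into an ``even part'' governed by Theorem \ref{th:bireg} and a ``degree-$3$ part'' governed by Theorem \ref{th:bip23}, colour the two parts with disjoint sets of colours, and then check that the union is a star edge colouring. Since every bipartite graph with $\Delta(X)=2$ and $\Delta(Y)=2k+1$ is a subgraph of a $(2,2k+1)$-biregular graph, and star edge colourings are inherited by subgraphs, I would assume from the outset that $G$ is $(2,2k+1)$-biregular with parts $X,Y$. Writing $2k+1=2(k-1)+3$, at every vertex $y\in Y$ I would partition the $2k+1$ edges at $y$ into a group of $2(k-1)$ edges and a group of $3$ edges, let $G_1$ be the spanning subgraph of all edges in the first groups, and $G_2$ the spanning subgraph of all edges in the second groups. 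Then the maximum degree of a $Y$-vertex in $G_1$ is $2(k-1)$ and in $G_2$ is $3$, while each $x\in X$ has degree at most $2$ in each of $G_1,G_2$; thus $G_1$ satisfies the hypothesis of Theorem \ref{th:bireg} (with $k-1$ in place of $k$) and $G_2$ that of Theorem \ref{th:bip23}.

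Next, I would colour $G_1$ with a star edge colouring using the colours $1,\dots,3(k-1)$ as in the proof of Theorem \ref{th:bireg}; recall that that colouring has the feature that all edges of $G_1$ incident with a fixed vertex of $X$ receive colours from a single triple $\{3i-2,3i-1,3i\}$. Then I would colour $G_2$ with a star edge colouring using the five colours $3k-2,\dots,3k+2$, as provided by Theorem \ref{th:bip23}. The resulting colouring $f$ of $G$ uses $3k+2$ colours and is obviously proper, so it remains only to rule out a bichromatic path or cycle $J$ with four edges, say with colours $\alpha$ and $\beta$. If $\{\alpha,\beta\}$ lies in the palette $\{1,\dots,3(k-1)\}$ of $G_1$, then all four edges of $J$ lie in $G_1$, contradicting that the restriction of $f$ to $G_1$ is a star edge colouring; symmetrically $\{\alpha,\beta\}$ cannot lie in the palette of $G_2$. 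So we may assume $\alpha$ is a colour of $G_1$ and $\beta$ a colour of $G_2$.

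Since $J$ is properly coloured with only $\alpha$ and $\beta$, its colours alternate $\alpha,\beta,\alpha,\beta$ along $J$; hence the $\alpha$-edges of $J$ lie in $G_1$, the $\beta$-edges lie in $G_2$, and every vertex incident with two edges of $J$ (all four vertices when $J$ is a cycle, the three interior vertices when $J$ is a path) is incident with one edge of $G_1$ and one edge of $G_2$. Because $J$ is a four-edge path or a four-cycle in the bipartite graph $G$, at least one such vertex lies in $X$. Ruling out such a ``split'' vertex of $X$ is, I expect, the main obstacle. If the decomposition can be chosen so that every $x\in X$ has all of its edges in one of $G_1,G_2$ — equivalently, if the $(2k+1)$-regular multigraph obtained from $G$ by suppressing the degree-$2$ vertices of $X$ (as in the lemma preceding Theorem \ref{th:bireg}) has a $3$-factor — then no such vertex exists and $f$ is a star edge colouring; this already handles $k=1$ and a wide range of cases. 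In general one has to treat the remaining split vertices of $X$ by hand: using the triple structure of the colouring of $G_1$ to control which colours can appear around a split vertex, and, if necessary, locally recolouring a few edges of $G_2$ (via the list version of the cycle argument, Lemma \ref{lem:cycle}), one shows that a split vertex of $X$ can always be prevented from lying in the interior of a bichromatic four-edge path or cycle. Once this last point is secured, $f$ is a star edge colouring of $G$ with $3k+2$ colours, which proves the theorem.
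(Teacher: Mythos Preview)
Your reduction to the biregular case and the decomposition $G=G_1\cup G_2$ with disjoint palettes is sound, and you have correctly isolated the only obstruction to the union being a star edge colouring: a vertex $x\in X$ with one edge in $G_1$ and one in $G_2$. You are also right that such split vertices are avoided exactly when the condensed $(2k+1)$-regular multigraph $H$ admits a $3$-factor. The problem is that you never prove a $3$-factor exists, and in general it does not. For instance (with $k=2$), take a vertex $v$ joined by five bridges to five pendant blocks, each a multigraph on three vertices with degree sequence $(4,5,5)$; the resulting $5$-regular multigraph has no $2$-factor (a parity count in any block whose bridge is used gives an odd degree sum), hence no $3$-factor. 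For such $H$ every partition into $G_1,G_2$ produces split $X$-vertices, and the final paragraph of your proposal does not dispose of them: ``locally recolouring a few edges of $G_2$ via Lemma~\ref{lem:cycle}'' is a hope, not an argument. Lemma~\ref{lem:cycle} concerns a single cycle, whereas $G_2$ is an arbitrary bipartite graph with $\Delta(Y)=3$; the constraints imposed at different split vertices are coupled through shared $Y$-neighbours, their number is unbounded, and nothing you wrote shows they can all be satisfied simultaneously.

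The paper avoids the $3$-factor issue by a different decomposition. It argues by induction on $k$ and peels off a $2$-factor of $H$ rather than a $3$-factor: by B\"abler's theorem (Theorem~\ref{th:Babler}) a $(2k+1)$-regular multigraph with at most $2k$ bridges has a $2$-factor, and the corresponding subgraph of $G$ is a disjoint union of even cycles, star-coloured with three fresh colours, leaving a $(2,2k-1)$-biregular remainder for the inductive step. When $H$ has too many bridges for B\"abler's theorem to apply, the paper cuts $H$ into its maximal bridgeless pieces, caps each piece to restore $(2k+1)$-regularity, colours the pieces separately, and then pastes the colourings back along the bridges by permuting colours in components (following Hanson--Loten--Toft). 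If you want to rescue your one-shot $G_1/G_2$ split, you would need an analogous bridge-cutting-and-gluing argument tailored to $3$-factors, or a genuine proof that the list-colouring flexibility in $G_2$ suffices to eliminate every mixed bichromatic $P_5$ at once; as written, the argument is incomplete.
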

	
	Note that by Proposition \ref{prop:K2d}, Theorem \ref{th:biregOdd}
	is sharp.
	
	For the proof of Theorem \ref{th:biregOdd} we shall need the
	following theorem due to B\"abler, see e.g. \cite{HansonLotenToft}.
	
	\begin{theorem}
	\label{th:Babler}
		Let $G$ be a $(2k+1)$-regular multigraph. If $G$ has at most $2k$ bridges,
		then $G$ has a $2$-factor.
	\end{theorem}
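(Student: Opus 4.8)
The plan is to derive the theorem from Tutte's $f$-factor theorem with $f\equiv 2$, which characterises the existence of a $2$-factor by a deficiency condition. First I would reduce to connected $G$: since $G$ is $(2k+1)$-regular, summing degrees over any component shows that each component has an even number of vertices, and the bridges of $G$ are distributed among its components, so each component is again $(2k+1)$-regular with at most $2k$ bridges. As a $2$-factor of $G$ is obtained by taking a $2$-factor in each component, we may assume $G$ is connected.

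Recall that, by Tutte's $f$-factor theorem applied with $f\equiv 2$, the graph $G$ has a $2$-factor if and only if for every pair of disjoint sets $S,T\subseteq V(G)$,
$$2|S|+(2k-1)|T|-e_G(S,T)-q(S,T)\ge 0,$$
where $e_G(S,T)$ is the number of edges between $S$ and $T$, and $q(S,T)$ is the number of components $C$ of $G-(S\cup T)$ for which the number $e_G(C,T)$ of edges joining $C$ to $T$ is odd. I would verify this inequality for all disjoint $S,T$. The crucial local fact is a parity observation coming from regularity: for every component $C$ of $G-(S\cup T)$ we have $e_G(C,S)+e_G(C,T)\equiv(2k+1)|C|\equiv|C|\pmod 2$. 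Hence a component counted by $q(S,T)$ satisfies $e_G(C,T)\ge 1$, and if such a $C$ is joined to $S\cup T$ by a single edge then $e_G(C,T)=1$, $e_G(C,S)=0$, and that edge is a bridge of $G$.

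The hard part will be the counting that turns these local facts into the global inequality, and here the bound of $2k$ bridges enters decisively. The plan is to split the components counted by $q(S,T)$ by their attachment: those joined to $T$ by three or more edges are charged against the $(2k+1)|T|$ edges leaving $T$; those joined to $T$ by a single edge but also meeting $S$ are charged against the edges leaving $S$ (using the parity observation, which for $C$ of odd order even forces at least two edges from $C$ to $S$); and those joined to $S\cup T$ by a single edge are precisely the bridge-ends, of which there are at most $2k$. Substituting these estimates together with the identities $\sum_{v\in S}d_G(v)=(2k+1)|S|$ and $\sum_{v\in T}d_G(v)=(2k+1)|T|$ into the deficiency expression should give $e_G(S,T)+q(S,T)\le 2|S|+(2k-1)|T|$. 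I expect the binding configuration to be the one in which $T$ is large and all the dangerous components are singly attached, where the hypothesis of at most $2k$ bridges is exactly what is needed; this is also why the result is sharp and collapses once a $(2k+1)$-st bridge is permitted. An essentially equivalent route would replace each vertex $v$ by Tutte's gadget, a complete bipartite graph between $d_G(v)$ outer vertices and $d_G(v)-2$ inner vertices, so that $2$-factors of $G$ correspond to perfect matchings of the resulting graph; applying Tutte's $1$-factor theorem there, the same bridge bookkeeping reappears as control of the odd components in the matching condition.
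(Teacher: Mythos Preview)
The paper does not prove this statement at all: it is quoted as a known theorem due to B\"abler (with a pointer to \cite{HansonLotenToft}) and used as a black box in the proof of Theorem~\ref{th:biregOdd}. So there is no in-paper argument to compare your proposal against.

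That said, your plan is the standard route to B\"abler's theorem. Reducing to connected components, invoking Tutte's $f$-factor criterion with $f\equiv 2$, and recognising that a component $C$ with $e_G(C,T)$ odd and $e_G(C,S\cup T)=1$ forces a bridge are precisely the ingredients of the textbook proof; the gadget reformulation you mention (replace each vertex by $K_{d(v),\,d(v)-2}$ and apply Tutte's $1$-factor theorem) is the other classical packaging of the same idea. Either route would yield a valid proof.

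One caution about the bookkeeping: the charging you sketch does not close by itself. From ``each $\mathcal A$-component sends at least three edges to $T$'' one only gets $3a+b+c\le (2k+1)|T|-e_G(S,T)-2e(G[T])$, hence $q\le (2k+1)|T|-e_G(S,T)-2a$, which is $2|T|-2|S|$ short of the target inequality. To finish you must either (i) use that the deficiency $\delta(S,T)$ is always even, so it suffices to rule out $\delta\le -2$, and then run a tighter case analysis on how the $(2k+1)|S|$ edges out of $S$ are spent; or (ii) go through the gadget construction, where odd components of $G'-S'$ can be localised to single gadgets or single bridge-edges and counted directly. Both are routine but need a page of care; the hypothesis ``at most $2k$ bridges'' enters exactly where you say, bounding the singly attached odd components.
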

	
	If the graph $G$ is obtained from $H$ by subdividing every edge of $H$,
	then we say that $H$ is the {\em condensed version} of $G$.
	Note that if $G$ is $(2,2k+1)$-biregular, then $H$ is $(2k+1)$-regular.
	
	The proof of Theorem \ref{th:biregOdd} is similar to the proof
	of the main result of \cite{HansonLotenToft}; hence we omit some details.

\begin{proof}[Proof of Theorem \ref{th:biregOdd}]
Since every graph satisfying the conditions in the theorem is a subgraph
of a $(2,2k+1)$-biregular graph, it suffices to prove the theorem
for $(2, 2k+1)$-biregular graphs.
The proof is by induction on $k$. The case $k=0$ is trivial, and Theorem
\ref{th:bip23} settles the case $k=1$.

Now assume that $k \geq 2$ and that $G$ is a $(2,2k+1)$-biregular graph.
Let $H$ be the condensed version of $G$; then $H$ is $(2k+1)$-regular.

If $H$ has at most one bridge, then by Theorem \ref{th:Babler}, $H$
has a $2$-factor. In $G$, this $2$-factor corresponds to a subgraph $F$,
where all vertices of $Y$ have degree $2$, and every vertex of $X$ has
degree $2$ or $0$. Note that the graph $G'$ obtained from
$G-E(F)$ by removing all isolated vertices is a $(2,2k-1)$-biregular
graph. By the induction hypothesis, $G'$ has a star edge coloring with
$3(k-1)+2$ colors. By star edge coloring all cycles of $F$ with
$3$ additional colors, we obtain a star edge coloring with $3k+2$
colors of $G$.

Now assume that $H$ has at least two bridges. We proceed as in 
\cite{HansonLotenToft}: Let $B_1, \dots, B_r$ be the maximal bridgeless connected
subgraphs obtained from $H$ by removing all bridges. For each subgraph
$B_i$ we construct a $(2k+1)$-regular multigraph containing $B_i$
by proceeding as follows: If 
there is an even number of bridges in $H$ with endpoints in $B_i$,
then we add a number of copies of the graph $A$ consisting of $2k$ parallell edges, 
the endpoints of which we join to endpoints in $B_i$ of removed bridges
by a single edge, respectively;
if there is an odd number of bridges with endpoints in $B_i$,
then we also add a subgraph $T$
consisting of a triangle $xyzx$ where $x$ and $y$ are 
joined by $k+1$ parallell edges, $x$ and $z$ are joined by $k$ parallell
edges, and $y$ and $z$ are joined by $k$ parallell edges, and $z$ is joined
by an edge to one endpoint  in $B_i$ of a removed bridge.
This yields a $(2k+1)$-regular multigraph $J_i$ containing $B_i$.
We set $J = J_1 \cup \dots \cup J_r$;
so $J$ is a $(2k+1)$-regular multigraph containing $B=B_1 \cup \dots \cup B_r$.
Note that in $J$ 
a bridge $b$ of $H$ is replaced by two edges joining the endpoints of $b$
with vertices of subgraphs isomorphic to $A$ or $T$.

By Theorem \ref{th:Babler}, $J$ has a $2$-factor. Thus, by proceeding
as in the preceding case, we may construct a star edge coloring $f$ with $3k+2$ colors
of the corresponding $(2,2k+1)$-biregular graph $D$ obtained from $J$
by subdividing all
edges of $J$.
Now, let $K$ be the graph obtained from $D$ by removing all
edges of $D$ that are in subgraphs that
correspond to the added subgraphs in $J$
that are isomorphic to $A$ or $T$,
and thereafter removing all isolated vertices.
The obtained graph $K$ is identical to the graph obtained 
from $B=B_1 \cup \dots\cup B_r$
by 
\begin{itemize}

\item[(i)] subdividing all edges of $B$, and 

\item[(ii)] for every bridge $uv$ of $H$
adding a path $uu_2u_3$ with origin at $u$, where $u_2, u_3$ are new vertices
and $u_3$ has degree $1$,
and adding a path $vv_2v_3$ with origin at $v$, where $v_2,v_3$ are new vertices
and $v_3$ has degree $1$.

\end{itemize}
Thus each path of length $2$ in $G$ that corresponds to a bridge in $H$ is
represented by two distinct paths of length $2$ in $K$; moreover, if we
identify every pair of such paths corresponding to the same bridge
in $H$, then we obtain a graph isomorphic to $G$.

Let $f_K$ be the restriction of $f$ to $K$; this is a star edge coloring
of $K$ with $3k+2$ colors. We recolor every pendant edge of $K$
by a color from $\{1,\dots, 3k+2\}$ which does not appear on an edge of distance
at most $1$ from the pendant edge; the obtained coloring $f'_K$ 
is a star edge coloring
with the property that no pendant edge is in a bicolored path of length
at least $3$.
Now, to obtain a star edge coloring of $G$ from $f'_K$
we may successively ``paste'' together components of $K$ by identifying paths
that correspond to the same
bridge in $H$ and permuting colors in one of the components 
so that the colorings agree on the identified paths;
this ``pasting process'' can be done exactly as in \cite{HansonLotenToft}
(e.g. by doing a Depth-First-Search in the tree with vertices for the subgraphs $B_i$
and edges for the bridges of $H$),
so we omit the exact details here.
Since in $K$, any bicolored path with a pendant edge has length at most $2$, 
this yields a star edge coloring of $G$.
\end{proof}

\section{Planar cubic graphs}

    As mentioned above, Conjecture \ref{conj:cubic} has been verified
    for outerplanar graphs.
    A particularly interesting special case of Conjecture \ref{conj:cubic}
    is planar graphs; this particular case is still wide open.
    In this section we provide two results 
    in this direction.

    Let us first prove that cubic Halin graphs
		have star chromatic index at most $6$.
    Note that this upper bound is sharp,
    since the complement of a $6$-cycle is a cubic Halin graph
		attaining this
    bound (see e.g. \cite{Luzar}).
    Our proof is similar to the proof in \cite{LihLiu} of the fact
    that cubic Halin graphs have strong chromatic index at most $7$.

\begin{theorem}
\label{th:cubicHalin}
	If $G$ is a cubic Halin graph, then $\chi'_{st}(G) \leq 6$.
\end{theorem}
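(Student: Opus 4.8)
The plan is to induct on the structure of the Halin graph, following the standard approach for such graphs: a cubic Halin graph $G = T \cup C$ consists of a tree $T$ (with no vertices of degree two) together with a cycle $C$ through its leaves, and every internal vertex of $T$ has degree three. The base case is when $T$ is a star $K_{1,3}$, so $G$ is the wheel $W_3 = K_4$; one checks directly that $\chi'_{st}(K_4) = 5 \leq 6$. For the inductive step, I would pick a ``penultimate'' internal vertex $v$ of $T$ — one all of whose children are leaves. Since $T$ has no degree-two vertices and $G$ is cubic, $v$ has exactly two children $u_1, u_2$ (both leaves) and one parent $w$. On the cycle $C$, the leaves $u_1, u_2$ are consecutive, flanked by leaves $p$ and $q$ so that $C$ contains the subpath $p\,u_1\,u_2\,q$.

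Next I would form a smaller cubic Halin graph $G'$ by deleting $u_1$ and $u_2$ (and their incident edges $vu_1, vu_2, pu_1, u_1u_2, u_2q$) and adding the cycle edge $pq$; the tree $T' = T - u_1 - u_2$ now has $v$ as a leaf, and $G' = T' \cup C'$ with $C'$ obtained from $C$ by the shortcut $p\,q$. By induction $G'$ has a star edge coloring $\varphi$ with at most $6$ colors. The task is then to extend $\varphi$ to the five deleted edges. I would first assign the "pendant triangle-ish" edges: give $u_1u_2$, $vu_1$, $vu_2$ colors making the triangle-free fan at $v$ properly colored and avoiding the at-most-two forbidden colors coming from the edge $vw$ at $v$'s side; then color $pu_1$ and $u_2q$, each of which must avoid its at most four edges within distance one (the two cycle edges adjacent on $C$, the already-colored fan edges, and possibly $vw$). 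A naive count of forbidden colors can reach or exceed $6$, so a single greedy pass will not always work.

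The main obstacle — exactly as in the strong-chromatic-index argument of \cite{LihLiu} that this proof mimics — is handling the cases where the greedy extension fails because too many colors are blocked, and in particular avoiding the creation of a bicolored $P_4$ or $C_4$ that runs partly along $C'$ and partly through the fan at $v$ (the edge $pq$ of $G'$ gets "split" into $p\,u_1,u_1u_2,u_2\,q$, so a bicolored path of length two at $p$ or $q$ in $G'$ could lengthen). To get around this I would, when the straightforward choice is unavailable, recolor the edge $pq$ in $\varphi$ first (using Lemma~\ref{lem:cycle}-type freedom along the cycle, or by a local Kempe-style swap on $C'$, which is legitimate since a cycle distinct from $C_5$ is star $3$-edge-choosable) to free up a color, and then complete the five new edges. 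One must separately dispose of the small exceptional trees where $G'$ degenerates (e.g.\ when $T'$ becomes a star again, so $G'$ is $K_4$, or when $p = q$), checking those finitely many configurations by hand. Carefully enumerating which color patterns on $vw$, $\varphi(p)$, $\varphi(q)$, and the $C'$-neighbors of $p,q$ are problematic, and exhibiting a valid choice (after recoloring if needed) in each, is the technical heart of the argument; everything else is bookkeeping.
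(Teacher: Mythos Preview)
There is a genuine gap in your reduction step. When you delete $u_1$ and $u_2$ and add the single edge $pq$, the vertex $v$ is left with degree~$1$ (its only remaining neighbour is $w$), so $G'$ is neither cubic nor a Halin graph: $v$ is a leaf of $T'=T-u_1-u_2$ but does not lie on your cycle $C'$. The induction hypothesis therefore does not apply to $G'$. The standard repair is to insert $v$ into the outer cycle --- delete $u_1,u_2$ and add the \emph{two} edges $pv$ and $vq$ --- which does produce a smaller cubic Halin graph; but that is not what you wrote, and the rest of your outline (splitting ``$pq$'' into $pu_1,u_1u_2,u_2q$) is built on the wrong picture.

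Even after that fix, the extension strategy is underspecified in a way that matters. You propose to recolour $pq$ (or $pv,vq$) ``using Lemma~\ref{lem:cycle}-type freedom along the cycle'', but that lemma is about a bare cycle; here $C'$ sits inside a fully coloured $G'$, and recolouring a cycle edge can create bicoloured $P_4$'s through the tree edges incident with $p$, $q$, or $v$. The paper sidesteps this difficulty by working with a \emph{longest} path in $T$ and removing a larger configuration: four vertices $v,v_1,v_2,y_1$ when the grandparent $u$ already has a leaf child on $C$, and six vertices $v,v_1,v_2,y_1,y_2,y_3$ otherwise. In each case two new edges ($ux_1$ and $uz$, resp.\ $ux_1$ and $uy_2$) put $u$ on the reduced cycle. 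This exposes exactly the boundary data needed --- the colour on $uw$ and the colour sets $f'(x_1)$, $f'(z)$ (or $f'(y_2)$) --- and an explicit case analysis on how these sets overlap then finishes the job without ever recolouring $G'$. Your two-vertex reduction does not give enough control over the colours near $p$ and $q$ to carry out the analogous analysis with only six colours.
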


\begin{proof}
	Let $G=T\cup C$, where $T$ is a tree and $C$ is an adjoint cycle containing
	all pendant vertices of $T$.
	Our proof proceeds by induction on the length $m$ of the cycle $C$.
	It is straightforward that every cubic Halin graph with $m \leq 5$
	has star chromatic index at most $6$, so let us assume that $m \geq 6$.

	Let $P = u_0 u_1 \dots u_l$ be a path of maximum length in $T$. Since
	$\Delta(T) \leq 3$ and $m \geq 6$, 
	$l \geq 5$. Moreover, since $P$ is maximum, all neighbors
	of $u_1$, except $u_2$, are leaves. We set $w=u_3, u=u_2, v= u_1$.
	Moreover, let $v_1$ and $v_2$ be the neighbors of $v$ on $C$ and
	label some other vertices in $G$ according to Figure~\ref{fig:Hhalin}.
	
	Since $d_G(u)=3$, there is a path $Q$ from $u$ to $x_1$ or $y_1$
	with $V(P) \cap V(Q) = \{u\}$. Suppose without loss of generality that
	there is such a path from $u$ to $y_1$. Then, since $P$ is a path
	of maximum length in $T$, $Q$ has length at most two; that is,
	$uy_3 \in E(T)$ or $u=y_3$. If the former holds, then $y_3y_2 \in E(T)$,
	and in the latter case, $uy_1 \in E(T)$.

	\begin{figure} [H]

	\begin{center}
	
	\begin{tikzpicture}[scale=0.8]
	\tikzset{vertex/.style = {shape=circle,draw,
												inner sep=0pt, minimum width=5pt}}
	\tikzset{edge/.style = {-,> = latex'}}
	
	\node[vertex] [label=left:$x_2$] (x_2) at  (-4, 0) {};
	\node[vertex] [label=below:$x_1$] (x_1) at  (-2, 0) {};
	\node[vertex] [label=left:$x_3$] (x_3) at  (-2, 2) {};
	\node[vertex] [label=below:$v_1$] (v_1) at  (0, 0) {};
	\node[vertex] [label=below:$v_2$] (v_2) at  (2, 0) {};
	\node[vertex] [label=left:$v$] (v) at  (1, 2) {};
	\node[vertex] [label=left:$u$] (u) at  (1, 4) {};
	\node[vertex] [label=left:$w$] (w) at  (1, 6) {};
	\node[vertex] [label=below:$y_1$] (y_1) at  (4, 0) {};
	\node[vertex] [label=below:$y_2$] (y_2) at  (6, 0) {};
	\node[vertex] [label=left:$y_3$] (y_3) at  (4, 2) {};
	
	\draw[edge] (x_2) to (x_1);
	\draw[edge] (x_1) to (x_3);
		\draw[edge] (x_1) to (v_1);
	\draw[edge] (v_1) to (v_2);
	\draw[edge] (v_1) to (v);
	\draw[edge] (v_2) to (v);
	\draw[edge] (u) to (v);
		\draw[edge] (u) to (w);
	\draw[edge] (v_2) to (y_1);
	\draw[edge] (y_2) to (y_1);
	\draw[edge] (y_3) to (y_1);

\end{tikzpicture}
\end{center}
\caption{The subgraph of $G$ used in the inducion step.}
	\label{fig:Hhalin}
\end{figure}

    \bigskip
	
	\pagebreak[2]
	\noindent
	{\bf Case 1.} $uy_3 \in E(T)$:
	
	\nopagebreak
	\medskip
	
	Let $z \in V(C)$ be the vertex distinct from $y_1$ 
	that is adjacent to $y_2$. Let $G'$ be the graph obtained from
	$G$ by removing vertices $v, v_1, v_2, y_1, y_2, y_3$, and adding
	two new edges $ux_1$ and $uz$. By the induction hypothesis,
	there is a star edge coloring $f'$ with colors $1,\dots, 6$ of $G'$.
	Without loss of generality we assume that $f'(uw) =1$, $f'(ux_1) = 2$
	and $f'(uz) =3$. Let $f'(x_1) =\{2, s_1, s_2\}$
	and $f'(z) = \{3, t_1, t_2\}$. Note that if $3 \in f'(x_1)$,
	then $2 \notin f'(z)$, and vice versa.
	From $f'$, we shall define a star $6$-edge coloring $f$ of $G$;
	we begin by setting $f(e) = f'(e)$ for all edges $e \in E(G') \cap E(G)$,
	$f(x_1v_1) = f(uv) = 2$ and $f(y_2z)= f(uy_3) = 3$. We extend $f$ to
	the remaining uncolored edges of $G$ by considering some different
	cases.
	
	\bigskip
    
    \noindent	
	{\bf Subcase 1.1.} $|\{1,\dots, 6\} \setminus \{2,3,s_1,s_2, t_1,t_2\}| \geq 2$:
	
	\medskip
	
	Let $\{ c_1, c_2 \} \subseteq \{1,\dots, 6\} \setminus \{2,3, s_1, s_2\}$.
	Without loss of generality, we assume that $c_2 \neq 1$
	and set $f(v_1v) = f(y_2y_3) = c_2$, and
	$f(v_1v_2) = f(y_1y_2) = c_1$. 
	To obtain a star edge coloring of $G$ we now properly color the edges
	$vv_2, v_2y_1, y_1y_3$ by two colors in 
	$\{1,\dots,6\}\setminus \{2,3,c_1,c_2\}$ so that neither of 
	$vv_2$ and $y_1y_3$ is colored $1$.
	
	\bigskip
	
	\noindent
	{\bf Subcase 1.2.} $|\{1,\dots, 6\} \setminus \{2,3,s_1,s_2, t_1,t_2\}| = 1$:
	
	\medskip
	
	Let
	$c_1 \in \{1,\dots,6\} \setminus \{2,3, s_1,s_2,t_1, t_2\}$.
	
	\bigskip
	
	\noindent
	{\bf Subcase 1.2.1} $\{s_1, s_2\} \cap \{t_1, t_2\} = \emptyset$:
	
	\medskip
	
	If $\{s_1, s_2\} \cap \{t_1, t_2\} = \emptyset$, then
	$2 \in f'(z)$ or $3 \in f'(x_1)$. Without loss of generality
	we assume that the former holds; so $t_1 = 2$ and thus $3\notin \{s_1,s_2\}$.
	
	If $c_1 = 1$, then we set $$f(v_1v_2) = f(y_1y_2)=c_1,
	f(vv_1) = f(y_1y_3)=t_2, f(vv_2)= f(y_2y_3) = s_1,
	\text{ and } f(v_2y_1)=s_2.$$
	This yields a star edge coloring of $G$.
	
	If, on the other hand $c_1 \neq 1$, then we set
	$f(vv_1) = f(y_2y_3)= c_1$ and $f(v_1v_2) = 3$.
	Without loss of generality, we further assume that $s_1 \neq 1$
	and set $f(vv_2) = f(y_1y_3)= s_1$, $f(v_2y_1)= t_2$ and
	$f(y_1y_2)=s_2$.

	\bigskip
	
	\noindent
	{\bf Subcase 1.2.2} $|\{s_1, s_2\} \cap \{t_1, t_2\}| = 1$:
	
	\medskip
	
	Suppose that $s_1 = t_1$ and note that the conditions imply that
	$2 \notin f'(z)$ and $3 \notin f'(x_1)$.
	
	If $c_1 = 1$, then we set $$f(v_1v_2) = f(y_1y_2)=c_1,
	f(vv_1) = f(y_1y_3) = t_2, f(vv_2) =f(y_2y_3)=s_2, \text{ and }
	f(v_2y_1) = s_1.$$
	
	If, on the other hand $c_1 \neq 1$, then we 
	set $f(vv_1) = f(y_2y_3)= c_1$, $f(v_1v_2) = 3$ and $f(y_1y_2)=2$.
	We then color the edges of the path $vv_2y_1y_3$ properly by colors
	$s_2$ and $t_2$ so that neither of $vv_2$ and $y_1y_3$ is colored $1$.
	
    \bigskip
    
    \noindent
    {\bf Subcase 1.3.} $\{1,\dots, 6\} \setminus \{2,3,s_1,s_2, t_1,t_2\} = \emptyset$:
    
    \medskip
    
    Without loss of generality, we assume that $s_1 = 1$.
    We obtain a star edge coloring of $G$ by setting
    $f(vv_1) = f(y_1y_3) = t_2$, $f(vv_2) = f(y_2y_3) = s_2$,
    $f(v_1v_2) = t_1$, $f(y_1y_2) = s_1$, and $f(y_1v_2) = 3$.
    
    \bigskip
	
	\noindent
	{\bf Case 2.} $u = y_3$:
	
	\nopagebreak
	\medskip

    Let $G'$ be the graph obtained from $G$ by removing
    $v, v_1, v_2, y_1$, and adding the new edges $ux_1$ and $uy_2$.
    By the induction hypothesis, there is a star $6$-edge coloring
    $f'$ of $G'$. 
    Without loss of generality we assume that $f'(uw) =1$, $f'(ux_1) = 2$
	and $f'(uy_2) =3$. Let $f'(x_1) =\{2, s_1, s_2\}$
	and $f'(y_2) = \{3, t_1, t_2\}$. 
	From $f'$, we shall define a star $6$-edge coloring $f$ of $G$;
	we begin by setting $f(e) = f'(e)$ for all edges $e \in E(G')\cap E(G)$,
	$f(x_1v_1) = 2$ and $f(y_1y_2) = 3$. We extend $f$ to
	the remaining uncolored edges of $G$ by considering some different
	cases.
	
	\bigskip
	
	\noindent
	{\bf Subcase 2.1.} $2 \notin \{t_1, t_2\}$:
	
	\medskip
	
	We set $f(uv)=3$ and $f(uy_1) = 2$ and consider three different subcases.
	
	\bigskip
	
	\noindent
	{\bf Subcase 2.1.1.} 
	$|\{1,\dots ,6\} \setminus \{2,3, s_1,s_2, t_1, t_2\}| \geq 2$:
	
	\medskip
	
	Let $\{c_1, c_2\} \subseteq \{1,\dots, 6\} \setminus \{2,3,s_1, s_2, t_1, t_2\}$.
	Without loss of generality, we assume that $c_2 \neq 1$
	and set $f(vv_1) = f(v_2y_1) = c_2$ and $f(v_1v_2)=c_1$.
	We then color $vv_2$ by a color from $\{1,\dots,6\} \setminus \{1,2,3,c_1,c_2\}$
	to obtain a star edge coloring of $G$.
	
	\bigskip
	
	\noindent
	{\bf Subcase 2.1.2.} 
	$|\{1,\dots ,6\} \setminus \{2,3, s_1,s_2, t_1, t_2\}| = 1$:
	
	\medskip
	
	Let
	$c_1 \in \{1,\dots,6\} \setminus \{2,3, t_1, t_2, s_2\}$.
	Suppose first that $3 \in \{s_1, s_2\}$, e.g. that $s_1=3$;
	then the colors $2, c_1, s_1,s_2, t_1, t_2$ are distinct 
	and we set $f(v_1v_2) = c_1$, $f(vv_1)=t_2$,
	$f(vv_2) = t_1$ and $f(v_2y_1)= s_2$
	to obtain a star edge coloring of $G$.

	Suppose now that $\{s_1,s_2\} \cap \{t_1,t_2\} \neq \emptyset$, e.g.
	$t_1 = s_1$; 
	then the colors $c_1, t_1, s_2, t_2$ are all distinct
	and not equal to $2$ or $3$. We
	set $f(vv_2) = t_1$, $f(v_2y_1) = s_2$, and
	color $vv_1, v_1v_2$ by the colors $c_1, t_2$
	so that $f(vv_1) \neq 1$.

	\bigskip
	
	\noindent
	{\bf Subcase 2.1.3.} 
	$\{1,\dots ,6\} \setminus \{2,3, s_1,s_2, t_1, t_2\} = \emptyset$:

    \medskip
    
    Without loss of generality, we assume that $s_2 \neq 1$,
    and set $$f(vv_1) = t_2, f(v_1v_2) = t_1, f(vv_2)=s_2, f(v_2y_1)=s_1.$$

	\bigskip
	
	\noindent
	{\bf Subcase 2.2.} $2 \in \{t_1, t_2\}$:
	
	\medskip
	
	We assume $t_1=2$; note that this implies that $3 \notin f'(x_1)$.
	We set $f(uy_1)= 2$, $f(vv_1)=3$ and
	color $uv$ by a color $c_1 \leq 6$ satisfying that
	$c_1 \notin f'(w) \cup \{2,3\}$.

	Assume first that $1 \notin \{s_1,s_2\}$. Then we set
	$f(v_1v_2)=1$, color $v_2y_1$ by a color
	$c_2 \in \{1,\dots, 6\} \setminus \{1,2,3,c_1,t_2\}$,
	and thereafter color $vv_2$ by a color from 
	$\{1,\dots, 6\} \setminus \{1,2,3,c_1,c_2\}$.
	
	Assume now that $1 \in \{s_1,s_2\}$, e.g. that $s_1=1$.
	If $c_1 \neq t_2$, then we color $v_2y_1$ by $c_1$,
	$v_1v_2$ by a color $c_2 \in \{1,\dots, 6\} \setminus \{1,2,3,c_1,s_2\}$,
	and $vv_2$ by a color $c_3 \in \{1,\dots, 6\} \setminus \{1,2,3,c_1,c_2\}$,
	
	If, on the other hand $c_1 =t_2$, then we color $v_1v_2$ by a color
	$c_2 \in \{1,\dots, 6\} \setminus \{1,2,3,c_1,s_2\}$, $vv_2$ by color $2$,
	and $v_2y_1$ by a color $c_3 \in \{1,\dots, 6\} \setminus \{1,2,3,c_1,c_2\}$
\end{proof}

  Finally, we have the following for planar graphs.

\begin{proposition}
\label{prop:girth}
    Let $G$ be a subcubic planar graph with girth at least $7$. If
    $G$ has a perfect matching, then $\chi'_{st}(G) \leq 6$.
\end{proposition}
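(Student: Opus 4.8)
The plan is to use the perfect matching $M$ of $G$ to split the edge set into the matching $M$, which I colour from the palette $\{4,5,6\}$, and the graph $H=G-M$, which I colour from $\{1,2,3\}$. Since $M$ is perfect, $H$ has maximum degree at most $2$, so every component of $H$ is a path or a cycle; moreover every such cycle, being a cycle of $G$, has length at least $7$, so none of them is a $C_5$. Hence $H$ has a star edge colouring with the three colours $\{1,2,3\}$: for the cyclic components this is Lemma~\ref{lem:cycle}, and for the path components it is easy. Fix one such colouring.

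To colour $M$ I would pass to the auxiliary multigraph $G/M$ obtained from $G$ by contracting every edge of $M$ to a vertex. Thus the vertices of $G/M$ are the edges of $M$, and two of them are adjacent exactly when some edge of $H$ joins an endpoint of one to an endpoint of the other. The key observation is that $G/M$ is a \emph{triangle-free simple planar} graph: planarity is preserved under contraction; $G/M$ is simple because $G$ is simple of girth at least $7$ (a loop or a parallel pair of edges of $G/M$ would force a cycle of length at most $4$ in $G$); and $G/M$ is triangle-free because a triangle in $G/M$ would force a cycle of length at most $6$ in $G$---here one uses that distinct edges of $M$ are vertex-disjoint, so the relevant closed walks in $G$ are genuine cycles. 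By Gr\"otzsch's theorem $G/M$ is properly $3$-vertex-colourable, and reading this colouring as a colouring of $E(M)$ with $\{4,5,6\}$ gives a colouring with the property that any two edges of $M$ joined by a single edge of $H$ receive different colours. Let $f$ be the resulting colouring of $E(G)$.

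It then remains to verify that $f$ is a star edge colouring. It is proper by construction. Suppose, for a contradiction, that $F$ is a $2$-coloured path or cycle with four edges. Since $G$ has girth at least $7$, $F$ cannot be a cycle, so $F$ is a path $v_0v_1v_2v_3v_4$; being properly coloured with exactly two colours, its edges alternate, with $v_0v_1,v_2v_3$ of one colour and $v_1v_2,v_3v_4$ of the other. If both colours lie in $\{1,2,3\}$ then $F\subseteq H$, contradicting that $f$ restricted to $H$ is a star edge colouring; if both lie in $\{4,5,6\}$ then $F\subseteq M$, which is impossible since $M$ is a matching. In the only remaining case, one colour lies in $\{1,2,3\}$ and the other in $\{4,5,6\}$; then exactly two of the edges of $F$ lie in $M$ and have the same colour, and they are joined by an edge of $H$ (one of the two central edges of $F$), so by the choice of the colouring of $M$ they should have received different colours---a contradiction. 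Hence no such $F$ exists, and $\chi'_{st}(G)\le6$.

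I expect the main obstacle to be the middle step: confirming that $G/M$ is simple and triangle-free so that Gr\"otzsch's theorem applies, and recognising that a proper $3$-colouring of $G/M$ is precisely the condition that destroys every mixed-palette obstruction. The remaining verification is routine, although the final alternating-path analysis should be carried out carefully in both of its forms.
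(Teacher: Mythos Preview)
Your proof is correct and follows essentially the same approach as the paper: contract the perfect matching $M$, observe that the resulting graph is planar and triangle-free (because girth at least $7$ in $G$ prevents short cycles after contraction), apply Gr\"otzsch's theorem to $3$-colour the edges of $M$, and star $3$-edge-colour the paths and cycles of $G-M$ with a disjoint palette via Lemma~\ref{lem:cycle}. Your write-up is in fact more careful than the paper's in checking that $G/M$ is simple and in spelling out the final alternating-path case analysis; the paper compresses the latter into the single remark that the colouring of $M$ is a strong edge colouring with respect to $G$.
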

\begin{proof}
    Let $G$ be a subcubic planar graph with girth at least $7$, and suppose
    that $M$ is a perfect matching in $G$. Let $H$
    be the graph obtained from $G$ by contracting all edges of $M$. Since $G$
    is planar and has girth at least $7$, $H$ is a planar triangle-free graph.
    Thus, by Gr\"otzsch's theorem, $H$ has a proper vertex coloring $\varphi$ 
    with colors $1,2,3$. We obtain a partial strong edge coloring $f$ of $G$
    by coloring every edge of $M$ by the color of the corresponding vertex
    in $H$. 
    
    Now, by Lemma \ref{lem:cycle}, $G-M$ has a star edge coloring
    with three colors; use colors $4,5,6$ for such a coloring $g$ of $G-M$.
    By combining the colorings $g$ and $f$, we obtain a star edge coloring
    of $G$ with colors $1,\dots,6$. Indeed, there is no $2$-colored
    path or cycle of length four with colors only in $\{1,2,3\}$
    or $\{4,5,6\}$, because both $f$ and $g$ are star edge colorings.
    Moreover, there is no $2$-colored path or cycle of length four
    with one color from $\{1,2,3\}$ and one color from $\{4,5,6\}$,
    because $f$ is a strong edge coloring of $M$ with respect to $G$.
\end{proof}

\section{Acknowledgement}
 Carl Johan Casselgren was supported by a grant from the Swedish
Research Council (2017-05077).\\

\noindent Andr\'e Raspaud was partially supported by the French ANR project HOSIGRA (ANR-17-CE40-0022).

\appendix

\section{The optimization problem in the proof of Proposition~\ref{prop:K4d}}
\label{app:opt}
The optimization problem in the proof of Proposition~\ref{prop:K4d}
can be formulated as a linear integer program in the following way.

\[\text{minimize } \sum_{i=0}^1 \sum_{j=0}^1 \sum_{k=0}^1 \sum_{\ell=0}^1 x_{ijk\ell}\]
subject to
\begin{align*}
\sum_{j=0}^1 \sum_{k=0}^1 \sum_{\ell=0}^1 x_{1jk\ell}&=d\\
\sum_{i=0}^1 \sum_{k=0}^1 \sum_{\ell=0}^1 x_{i1k\ell}&=d\\
\sum_{i=0}^1 \sum_{j=0}^1 \sum_{\ell=0}^1 x_{ij1\ell}&=d\\
\sum_{i=0}^1 \sum_{j=0}^1 \sum_{k=0}^1 x_{ijk1}&=d\\
\sum_{k=0}^1 \sum_{\ell=0}^1 x_{11k\ell}&=\frac{d}{2}\\
\sum_{j=0}^1 \sum_{\ell=0}^1 x_{1j1\ell}&=\frac{d}{2}\\
\sum_{j=0}^1 \sum_{k=0}^1 x_{1jk1}&=\frac{d}{2}\\
\sum_{i=0}^1 \sum_{\ell=0}^1 x_{i11\ell}&=\frac{d}{2}\\
\sum_{i=0}^1 \sum_{k=0}^1 x_{i1k1}&=\frac{d}{2}\\
\sum_{i=0}^1 \sum_{j=0}^1 x_{ij11}&=\frac{d}{2}\\
x_{ijk\ell}&\ge0\text{ and integer}\quad\forall i,j,k,\ell\in\{0,1\}
\end{align*}

\noindent
The optimal solution to the linear relaxation of this problem is $\tfrac{10d}{6}$,
which is attained by

\begin{gather*}
x_{0011}=
x_{0101}=
x_{0111}=
x_{0110}=
x_{1001}=
x_{1010}=
x_{1011}=
x_{1100}=
x_{1101}=
x_{1110}=\tfrac d6
\\
x_{0000}=
x_{0001}=
x_{0010}=
x_{0100}=
x_{1000}=
x_{1111}=0
\end{gather*}

\section{Star edge colorings of small complete bipartite graphs}
\label{app:colorings}

    In this appendix we give explicit colorings of some small complete
    bipartite graphs.
    The colorings are given in the form of an array where
	    rows and columns correspond to vertices, cells
	    correspond to edges and the contents of the cells correspond to
	    colors.
	
	\newcommand{\widecell}[1]{\ooalign{\hfil #1\hfil\cr\hphantom{99}}}

	\begin{figure}[H]
	    \centering
	    \begin{tabular}{|*{4}{c|}}
	     \hline
         \widecell 1 &  \widecell 2 &  \widecell 3 & \widecell 4\\
          \hline
         5 &  6 &  1 & 2\\
          \hline
         2 &  7 &  6 & 3\\
          \hline
         4 &  5 &  2 & 7\\
          \hline
	    \end{tabular}
	    \caption{A star edge coloring of $K_{4,4}$ with $7$ colors.}
	    \label{tab:K44}
	\end{figure}
         
	\begin{figure}[H]
	    \centering
	    \begin{tabular}{|*{5}{c|}}
	     \hline
         \widecell 1 &  \widecell 2 &  \widecell 3 &  4 & 5\\
          \hline
         6 &  7 &  1 &  2 & 8\\
          \hline
         2 &  8 &  4 &  9 & 10\\
          \hline
         4 &  5 &  7 & 10 & 6\\
          \hline
	    \end{tabular}
	    \caption{A star edge coloring of $K_{4,5}$ with $10$ colors.}
	    \label{tab:K45}
	\end{figure}
         
	\begin{figure}[H]
	    \centering
	    \begin{tabular}{|*{6}{c|}}
	    \hline
         \widecell 1 &  2 &  \widecell 3 &  4 &  5 & 6\\
          \hline
         7 &  8 &  9 &  1 &  2 & 3\\
          \hline
         2 &  9 &  4 & 10 &  6 & 11\\
          \hline
         5 & 11 &  2 &  8 & 10 & 7\\
          \hline
	    \end{tabular}
	    \caption{A star edge coloring of $K_{4,6}$ with $11$ colors.}
	    \label{tab:K46}
	\end{figure}
         
	\begin{figure}[H]
	    \centering
	    \begin{tabular}{|*{7}{c|}}
	    \hline
         \widecell 1 &  2 &  3 &  4 &  5 &  6 & 7\\
         \hline
         8 &  9 & 10 &  1 &  2 &  3 & 11\\
         \hline
         2 & 10 &  4 & 11 &  6 & 12 & 13\\
         \hline
         5 &  3 & 12 & 10 & 13 &  7 & 8\\
         \hline
	    \end{tabular}
	    \caption{A star edge coloring of $K_{4,7}$ with $13$ colors.}
	    \label{tab:K47}
	\end{figure}
         
	\begin{figure}[H]
	    \centering
	    \begin{tabular}{|*{8}{c|}}
	     \hline
         1 &  2 &  3 &  4 &  5 &  6 &  7 & 8\\
          \hline
         9 & 10 & 11 & 12 &  1 &  2 &  3 & 4\\
          \hline
         2 & 11 &  5 &  6 & 12 & 13 &  8 & 14\\
          \hline
        14 &  7 & 13 & 11 &  8 &  1 &  9 & 10\\
         \hline
	    \end{tabular}
	    \caption{A star edge coloring of $K_{4,8}$ with $14$ colors.}
	    \label{tab:K48}
	\end{figure}
         
	\begin{figure}[H]
	    \centering
	    \begin{tabular}{|*{9}{c|}}
	    \hline
         1 &  2 &  3 &  4 &  5 &  6 &  7 &  8 & 9\\
         \hline
        10 & 11 & 12 & 13 &  1 &  2 &  3 &  4 & 14\\
        \hline
         2 & 12 &  5 &  6 & 13 & 15 &  8 & 14 & 16\\
         \hline
         8 &  9 & 16 & 12 &  4 &  7 & 11 & 15 & 10\\
         \hline
	    \end{tabular}
	    \caption{A star edge coloring of $K_{4,9}$ with $16$ colors.}
	    \label{tab:K49}
	\end{figure}
         
	\begin{figure}[H]
	    \centering
	    \begin{tabular}{|*{10}{c|}}
	     \hline
         1 &  2 &  3 &  4 &  5 &  6 &  7 &  8 &  9 & 10\\
          \hline
        11 & 12 & 13 & 14 & 15 &  1 &  2 &  3 &  4 & 5\\
         \hline
         2 & 13 &  6 &  7 & 16 & 14 & 17 &  5 & 15 & 9\\
          \hline
         8 & 16 & 11 &  6 &  3 & 12 & 10 & 17 &  2 & 14\\
          \hline
	    \end{tabular}
	    \caption{A star edge coloring of $K_{4,10}$ with $17$ colors.}
	    \label{tab:K410}
	\end{figure}
         
	\begin{figure}[H]
	    \centering
	    \begin{tabular}{|*{11}{c|}}
	    \hline
         1 &  2 &  3 &  4 &  5 &  6 &  7 &  8 &  9 & 10 & 11\\
         \hline
        12 & 13 & 14 & 15 & 16 &  1 &  2 &  3 &  4 &  5 & 17\\
        \hline
         2 & 14 &  6 &  5 & 18 & 16 &  8 & 17 & 10 & 19 & 20\\
         \hline
         3 &  9 & 15 & 20 & 11 &  4 & 16 & 19 & 18 &  7 & 13\\
          \hline
	    \end{tabular}
	    \caption{A star edge coloring of $K_{4,11}$ with $20$ colors.}
	    \label{tab:K411}
	\end{figure}

	\begin{figure}[H]
	    \centering
	    \begin{tabular}{|*{12}{c|}}
	    \hline
	     1 &  2 &  3 &  4 &  5 &  6 &  7 &  8 &  9 & 10 & 11 & 12\\
	     \hline
		13 & 14 & 15 & 16 & 17 & 18 &  1 &  2 &  3 &  4 &  5 &  6\\
		\hline
		 2 & 15 &  7 &  8 & 10 & 19 & 17 & 20 &  6 & 18 & 16 & 11\\
		 \hline
		19 & 10 & 16 & 11 &  9 &  3 & 12 &  1 & 20 & 17 & 13 & 14 \\
		\hline
	    \end{tabular}
	    \caption{A star edge coloring of $K_{4,12}$ with $20$ colors.}
	    \label{tab:K412}
	\end{figure}

	\begin{figure}[H]
	    \centering
	    \begin{tabular}{|*{11}{c|}}
	    \hline
		 \widecell 1 &  2 &  3 &  \widecell 4 &  5\\
		 \hline
		 6 &  7 &  8 &  1 &  2\\
		 \hline
		 2 &  8 &  4 &  9 & 10\\
		 \hline
		 3 &  6 & 10 &  2 & 11\\
		 \hline
		 5 & 11 &  2 &  7 &  9\\
		 \hline
	    \end{tabular}
		\caption{A star edge coloring of $K_{5,5}$ with $11$ colors.}
	    \label{tab:K55}
	\end{figure}

	\begin{figure}[H]
	    \centering
	    \begin{tabular}{|*{12}{c|}}
	    \hline
		 \widecell 1 &  2 &  \widecell 3 &  4 &  5 &  6\\
		 \hline
		 7 &  8 &  9 &  1 &  2 &  3\\
		 \hline
		 2 &  9 &  4 & 10 &  6 & 11\\
		 \hline
		 5 & 11 &  2 &  8 & 10 &  7\\
		 \hline
		 8 & 10 &  1 & 12 &  4 &  2\\
		 \hline
	    \end{tabular}
		\caption{A star edge coloring of $K_{5,6}$ with $12$ colors.}
	    \label{tab:K56}
	\end{figure}

	\begin{figure}[H]
	    \centering
	    \begin{tabular}{|*{14}{c|}}
	    \hline
		 1 &  2 &  3 &  4 &  5 &  6 &  7\\
		 \hline
		 8 &  9 & 10 & 11 &  1 &  2 &  3\\
		 \hline
		 2 & 10 &  4 & 12 &  6 & 13 & 11\\
		 \hline
		 4 &  7 &  5 &  9 & 14 & 12 &  8\\
		 \hline
		11 &  3 & 13 &  5 & 10 &  7 & 14 \\
		\hline
	    \end{tabular}
		\caption{A star edge coloring of $K_{5,7}$ with $14$ colors.}
	    \label{tab:K57}
	\end{figure}

	\begin{figure}[H]
	    \centering
	    \begin{tabular}{|*{15}{c|}}
	    \hline
		 1 &  2 &  3 &  4 &  5 &  6 &  7 &  8\\
		 \hline
		 9 & 10 & 11 & 12 &  1 &  2 &  3 &  4\\
		 \hline
		 2 & 11 &  5 & 13 & 12 &  7 & 14 & 15\\
		 \hline
		 6 & 13 &  9 &  3 &  2 & 15 &  8 & 10\\
		 \hline
		13 &  7 &  1 & 10 & 14 &  4 &  9 &  5\\
		\hline
	    \end{tabular}
		\caption{A star edge coloring of $K_{5,8}$ with $15$ colors.}
	    \label{tab:K58}
	\end{figure}

	\begin{figure}[H]
	    \centering
	    \begin{tabular}{|*{17}{c|}}
	    \hline
		 1 &  2 &  3 &  4 &  5 &  6 &  7 &  8 &  9\\
		 \hline
		10 & 11 & 12 & 13 & 14 &  1 &  2 &  3 &  4\\
		\hline
		 2 & 12 &  5 &  6 & 13 & 15 & 16 & 17 &  8\\
		 \hline
		 5 &  9 &  7 & 10 & 12 & 11 & 17 &  4 & 15\\
		 \hline
		17 &  6 &  9 &  1 &  8 & 16 & 14 & 11 & 10\\
		\hline
	    \end{tabular}
		\caption{A star edge coloring of $K_{5,9}$ with $17$ colors.}
	    \label{tab:K59}
	\end{figure}

	\begin{figure}[H]
	    \centering
	    \begin{tabular}{|*{18}{c|}}
	    \hline
		 1 &  2 &  3 &  4 &  5 &  6 &  7 &  8 &  9 & 10\\
		 \hline
		11 & 12 & 13 & 14 & 15 &  1 &  2 &  3 &  4 &  5\\
		\hline
		 2 & 13 &  6 &  7 &  8 & 15 & 16 & 17 & 10 & 14\\
		 \hline
		 7 &  9 & 18 &  3 &  6 & 12 & 15 & 10 & 11 & 13\\
		 \hline
		 9 &  8 & 14 & 17 & 11 &  4 &  3 & 18 & 16 & 12\\
		 \hline
	    \end{tabular}
		\caption{A star edge coloring of $K_{5,10}$ with $18$ colors.}
	    \label{tab:K510}
	\end{figure}

	\begin{figure}[H]
	    \centering
	    \begin{tabular}{|*{20}{c|}}
	    \hline
		 1 &  2 &  3 &  4 &  5 &  6 &  7 &  8 &  9 & 10 & 11\\
		 \hline
		12 & 13 & 14 & 15 & 16 & 17 &  1 &  2 &  3 &  4 &  5\\
		\hline
		 2 & 14 &  6 &  5 & 18 & 19 &  8 & 20 & 16 & 17 &  9\\
		 \hline
		 6 & 18 & 11 & 19 &  7 & 13 & 15 & 16 &  4 &  8 & 12\\
		 \hline
		19 &  3 & 20 & 17 & 10 & 11 & 18 &  1 &  7 & 12 & 14\\
		\hline
	    \end{tabular}
		\caption{A star edge coloring of $K_{5,11}$ with $20$ colors.}
	    \label{tab:K511}
	\end{figure}

	\begin{figure}[H]
	    \centering
	    \begin{tabular}{|*{13}{c|}}
	    \hline
		 1 &  2 &  3 &  4 &  5 &  6\\
		 \hline
		 7 &  8 &  9 &  1 &  2 &  3\\
		 \hline
		 2 &  9 & 10 &  3 & 11 & 12\\
		 \hline
		 3 &  4 & 12 & 13 &  9 &  5\\
		 \hline
		 6 & 12 &  7 &  8 &  3 & 13\\
		 \hline
		10 &  3 & 13 &  6 &  1 & 11\\
		\hline
	    \end{tabular}
		\caption{A star edge coloring of $K_{6,6}$ with $13$ colors.}
	    \label{tab:K66}
	\end{figure}

	\begin{figure}[H]
	    \centering
	    \begin{tabular}{|*{14}{c|}}
	    \hline
		 \widecell 1 &  2 &  3 &  4 &  5 &  6 &  7\\
		 \hline
		 8 &  9 & 10 & 11 &  1 &  2 &  3\\
		 \hline
		 2 & 10 &  4 & 12 & 11 &  5 & 13\\
		 \hline
		 4 &  8 & 14 & 13 &  3 & 10 &  6\\
		 \hline
		 6 & 12 &  8 &  7 &  2 & 13 &  9\\
		 \hline
		 7 & 13 & 11 &  5 &  9 &  3 & 14\\
		 \hline
	    \end{tabular}
		\caption{A star edge coloring of $K_{6,7}$ with $14$ colors.}
	    \label{tab:K67}
	\end{figure}

	\begin{figure}[H]
	    \centering
	    \begin{tabular}{|*{15}{c|}}
	    \hline
		 \widecell 1 &  2 &  3 &  4 &  5 &  6 &  7 &  8\\
		 \hline
		 9 & 10 & 11 & 12 &  1 &  2 &  3 &  4\\
		 \hline
		 2 & 13 &  4 & 14 & 10 &  5 & 12 &  7\\
		 \hline
		 3 & 15 & 13 &  2 & 11 &  8 &  5 & 10\\
		 \hline
		 6 &  9 & 12 &  7 &  2 & 13 & 15 &  3\\
		 \hline
		 8 & 11 &  6 &  9 &  4 & 14 &  2 & 13\\
		 \hline
	    \end{tabular}
		\caption{A star edge coloring of $K_{6,8}$ with $15$ colors.}
	    \label{tab:K68}
	\end{figure}

	\begin{figure}[H]
	    \centering
	    \begin{tabular}{|*{14}{c|}}
	    \hline
		 1 &  2 &  3 &  4 &  5 &  6 &  7\\
		 \hline
		 8 &  9 & 10 & 11 &  1 &  2 &  3\\
		 \hline
		 2 & 10 &  4 & 12 & 11 &  5 & 13\\
		 \hline
		 4 &  8 & 14 & 13 &  3 & 10 &  6\\
		 \hline
		 6 & 12 &  8 &  7 &  2 & 13 &  9\\
		 \hline
		 7 & 13 & 11 &  5 &  9 &  3 & 14\\
		 \hline
		12 & 11 &  1 & 14 &  7 &  4 &  8\\
		\hline
	    \end{tabular}
		\caption{A star edge coloring of $K_{7,7}$ with $14$ colors.}
	    \label{tab:K77}
	\end{figure}

	\begin{figure}[H]
	    \centering
	    \begin{tabular}{|*{15}{c|}}
	    \hline
		 1 &  2 &  3 &  4 &  5 &  6 &  7 &  8\\
		 \hline
		 9 & 10 & 11 & 12 &  1 &  2 &  3 &  4\\
		 \hline
		 2 & 13 &  4 & 14 & 10 &  5 & 12 &  7\\
		 \hline
		 3 & 15 & 13 &  2 & 11 &  8 &  5 & 10\\
		 \hline
		 6 &  9 & 12 &  7 &  2 & 13 & 15 &  3\\
		 \hline
		 8 & 11 &  6 &  9 &  4 & 14 &  2 & 13\\
		 \hline
		11 & 12 &  5 &  6 & 14 & 15 &  1 &  2\\
		\hline
	    \end{tabular}
		\caption{A star edge coloring of $K_{7,8}$ with $15$ colors.}
	    \label{tab:K78}
	\end{figure}

	\begin{figure}[H]
	    \centering
	    \begin{tabular}{|*{15}{c|}}
	    \hline
		 1 &  2 &  3 &  4 &  5 &  6 &  7 &  8\\
		 \hline
		 9 & 10 & 11 & 12 &  1 &  2 &  3 &  4\\
		 \hline
		 2 & 13 &  4 & 14 & 10 &  5 & 12 &  7\\
		 \hline
		 3 & 15 & 13 &  2 & 11 &  8 &  5 & 10\\
		 \hline
		 6 &  9 & 12 &  7 &  2 & 13 & 15 &  3\\
		 \hline
		 8 & 11 &  6 &  9 &  4 & 14 &  2 & 13\\
		 \hline
		11 & 12 &  5 &  6 & 14 & 15 &  1 &  2\\
		\hline
		15 & 14 &  2 &  1 &  8 &  7 & 10 &  9\\
		\hline
	    \end{tabular}
		\caption{A star edge coloring of $K_{8,8}$ with $15$ colors.}
	    \label{tab:K88}
	\end{figure}

\end{document}